\newcounter{example}[section]
\newtheorem{Theorem}{Theorem }
\newtheorem{Featured Theorem}{Featured Theorem }
\newtheorem{Lemma}{Lemma }
\newtheorem*{Lemma*}{Lemma }
\newtheorem{Corollary}{Corollary }
\newtheorem{remark}{Remark}
\newcommand{\convexe}{\mathcal{C}\xspace} 
\newcommand{\Boule}{\mathcal{B}\xspace} 
\newcommand{\ExactSparse}{m-\textsc{Exact-Sparse}\xspace}
\newcommand{\signal}{y\xspace} 
\newcommand{\argmax}{\operatorname{arg\,max}}
\newcommand{\argmin}{\operatorname{arg\,min}}
\newcommand{\sign}{\operatorname{sign}}
\newcommand{\Span}{\operatorname{span}}
\newcommand{\conv}{\operatorname{conv}}
\newcommand{\matricesupport}{\Phi_{opt}}
\newcommand{\norme}[1]{\| {#1} \|}
\newcommand{\va}[1]{| {#1} |}
\newcommand{\ps}[1]{\langle  {#1} \rangle}
\begin{document}
%
\title{Recovery and convergence rate of the Frank-Wolfe Algorithm for the \ExactSparse Problem}
%
%
%

\author{Farah~Cherfaoui,
        Valentin~Emiya,
        Liva~Ralaivola
        and~Sandrine~Anthoine
\thanks{F. Cherfaoui, V. Emiya are with Aix Marseille Univ, Universit\'e de Toulon, CNRS, LIS, Marseille, France.}
\thanks{L. Ralaivola is with Aix Marseille Univ, Universit\'e de Toulon, CNRS, IUF and Criteo.}
\thanks{S. Anthoine is with Aix Marseille Univ, CNRS, Centrale Marseille, I2M, Marseille, France.}
}

%
%

\markboth{Journal of \LaTeX\ Class Files,~Vol.~14, No.~8, August~2015}%
{Shell \MakeLowercase{\textit{et al.}}: Bare Demo of IEEEtran.cls for IEEE Journals}
%



\maketitle

\begin{abstract}
We study the properties of the Frank-Wolfe algorithm to solve the \ExactSparse reconstruction problem, where a signal $\signal$ must be expressed as a sparse linear combination of a predefined set of atoms, called {\em dictionary}. We prove that when the signal is sparse enough with respect to the coherence of the dictionary, then the iterative process implemented by the Frank-Wolfe algorithm only recruits atoms from the support of the signal, that is the smallest set of atoms from the dictionary that allows for a perfect reconstruction of $\signal$. We also prove that under this same condition, there exists an iteration beyond which the algorithm converges exponentially.
\end{abstract}

\begin{IEEEkeywords}
sparse representation, Frank-Wolfe algorithm, recovery properties, exponential convergence.
\end{IEEEkeywords}

%
\IEEEpeerreviewmaketitle


\section{Introduction}
%
%
%
%
 
\subsection{The \ExactSparse problem}


Given a signal $y$ in $\mathbb{R}^d$ that is $m$-sparse with respect to some dictionary $\Phi$ in $\mathbb{R}^{d\times n}$, i.e. $y$ is a linear combination of at most $m$ atoms/columns of $\Phi$, the \ExactSparse problem is to find a linear combination of (at most) $m$ atoms that is equal to $y$.
%
The Matching Pursuit (MP)~\cite{MZ92} and Orthogonal Matching Pursuit (OMP)~\cite{PRK93} algorithms feature nice properties with respect to the \ExactSparse problem, namely {\em recovery} and {\em convergence} properties.
Here, we study these properties for the Frank-Wolfe optimization procedure~\cite{FW56} when used to tackle the \ExactSparse problem. 

In the sequel, the dictionary $\Phi = [\varphi_1 \cdots \varphi_n] \in\mathbb{R}^{d\times n}$ is the matrix made  of the $n$ atoms $\varphi_1,\ldots,\varphi_n\in\mathbb{R}^d$, assumed to be so that $\|\varphi_i\|_2=1,\forall i$. Here, we consider the case where the \textit{support} $\Lambda \subseteq \{1,\ldots,n\}$ of the $m$-sparse signal $\signal$ is unique, the support being the smallest subset of $\{1,\ldots,n\}$ such that $\signal$ is in the span of the atoms indexed by $\Lambda$ --~therefore $ |\Lambda| \leq m$.
(Section \ref{sec:theo} gives the conditions under which the support is unique.)
The sparsity of a linear combination $\Phi x$ ($x\in\mathbb{R}^n$) such that $y=\Phi x$ is measured by the number of nonzero entries of $x$, sometimes referred to as the (quasi-)norm $\norme{x}_0$ of $x$.

Formally the  \ExactSparse problem is the following. Given a dictionary $\Phi$ and an $m$-sparse signal $\signal$:
\begin{equation}
 \text{find } x \text{ s.t. } \signal = \Phi x \text{ and } \norme{ x }_0 \leq m. \label{eq:original pb exactSparse}
 \end{equation}
Since $\signal$ is a linear combination of at most $m$ atoms of $\Phi$, a solution of Problem \eqref{eq:original pb exactSparse} can be obtained by solving:
\begin{equation}
  \label{pb exactSparse} 
   \argmin_{x \in \mathbb{R}^n } \tfrac{1}{2}\norme{\signal - \Phi x}^2_2 \ \ \text{s.t. } \norme{ x }_0 \leq m.
\end{equation}

\subsection{Related work}
The \ExactSparse problem is NP-hard~\cite{DMA97}, and the ability of a few algorithms that explicitly enforce the $\ell_0$-constraint to approximate its solution
have been studied, among which brute force methods~\cite{M02}, nonlinear programming approaches~\cite{RK97}, and greedy pursuits~\cite{MZ92, PRK93, T03, T11, BD08}.
Favorable conditions that simultaneously apply to both the sparsity level $m$ and the dictionary $\Phi$ have been exhibited that provide OMP and MP with guarantees on
their effectiveness to find exact solutions to \ExactSparse.

Another way to tackle this problem is to recourse to a relaxation strategy 
by, for instance, replacing the $\ell_0$ norm by an $\ell_1$ norm. 
Doing so gives rise to a convex optimization problem, e.g. LASSO~\cite{T96} and Basis Pursuit~\cite{CDS01}, that can be handled with a number of provably efficient methods \cite{B15}.
 In addition to the computational benefit of relaxing the problem, Tropp~\cite{T06} proved 
 that under proper conditions, the supports $\Lambda^{\text{LASSO}}$ and $\Lambda^{\text{BP}}$
 of the solutions of the LASSO and Basis Pursuit $\ell_1$-relaxations are such that $\Lambda^{\text{LASSO}}\subseteq\Lambda$ and $\Lambda^{\text{BP}}\subseteq\Lambda.$


Here we study the properties of the Frank-Wolfe algorithm \cite{FW56} to solve the following $\ell_1$-relaxation problem:
\begin{equation}\label{equation: problem ExactSparse relaxation}
\begin{split} 
  \argmin_{x \in \mathbb{R}^n } \tfrac{1}{2}\norme{\signal - \Phi x}^2_2  \ \ \text{s.t. } \norme{ x }_1 \leq \beta 
 \end{split} 
\end{equation}
where $\norme{.}_1$ is the $\ell_1$ norm and $\beta > 0$ is a hyperparameter. More precisely, we study how solving~\eqref{equation: problem ExactSparse relaxation}
may provide an exact solution to~\eqref{pb exactSparse} and, therefore, to \eqref{eq:original pb exactSparse}.

Going back to the original unrelaxed \ExactSparse problem, we may refine the evoked results
regarding the blessed  Matching Pursuit (MP)~\cite{MZ92} and Orthogonal Matching Pursuit (OMP)~\cite{PRK93} greedy algorithms: 
Tropp~\cite{T04} and Gribonval and Vandergheynst~\cite{GV06} proved that, if the size $m$ of the support is small enough, then at each iteration, both MP and OMP pick up an atom indexed by the support, thus ensuring recovery properties. Furthermore, as far as convergence is concerned,
that is as far as the ability of the procedures to find an exact linear expansion of $\signal$ 
is concerned,
it was proved that MP shows an exponential rate of convergence, and that OMP reaches convergence after exactly $m$ iterations.
\begin{remark}
Tropp~\cite{T04} and Gribonval and Vandergheynst~\cite{GV06} also generalized their results to 
the case of compressible signals, which are signals that are close to be $m$-sparse. It is beyond
the scope of the present paper to consider such signals.
\end{remark}

On the other hand, the Frank-Wolfe algorithm~\cite{FW56} is a general-purpose algorithm designed for constrained convex optimization. It has been proven to converge exponentially if the objective function under consideration is strongly convex~\cite{GM86} and linearly in the other cases~\cite{FW56}.
As we will see in Section \ref{section: FW for ExactSparse}, when solving~\eqref{equation: problem ExactSparse relaxation}, the atom selection steps in Matching Pursuit and Frank-Wolfe are very similar. This similarity inspired Jaggi and al.~\cite{LKTJ17} to exploit tools  used to analyze the Frank-Wolfe algorithm and prove the convergence of the MP algorithm when no assumptions are made on the dictionary.

Here, we go the other way around: we will exploit tools used to analyze MP and OMP to establish  properties of the Frank-Wolfe algorithm when seeking a solution of Problem \eqref{equation: problem ExactSparse relaxation}. 

\subsection{Main Results}
We show that the Frank-Wolfe algorithm, when 
used to solve~\eqref{equation: problem ExactSparse relaxation}, enjoys recovery and convergence properties regarding \ExactSparse that are
similar to those established in~\cite{GV06,T04} for MP and OMP, under the very same assumptions.

Our results rely on a fundamental quantity associated to a dictionary $\Phi = [\varphi_1 \cdots \varphi_n ]$: its Babel function, defined as
$$ \mu_1(m) = \max\limits_{\va{ \Lambda } = m}\max\limits_{i \notin \Lambda} \sum\limits_{j \in \Lambda} \va{ \ps{  \varphi_i, \varphi_{j} } },$$
where, by convention:  $\mu_1(0) = 0$. 
For given $m$, $\mu_1(m)$ is roughly a measure on how well any atom from $\Phi$ can be expressed as a linear combination of a set of $m$ other atoms.
When $m = 1$, the Babel function boils down to
$$ \mu = \mu_1(1) = \max\limits_{j \neq k} \va{ \ps{  \varphi_{j}, \varphi_{k} } },$$
which is known as the {\em coherence} of $\Phi$.
In the sequel, we will consider only $m$-sparse signal such that: $ m < \tfrac{1}{2}(\mu^{-1} + 1)$.
In Featured Theorem~\ref{Featured Theorem: recovery condition FW}, we state that when $ m < \tfrac{1}{2}(\mu^{-1} + 1)$, then at each iteration the Frank-Wolfe algorithm picks up an atom indexed by the support of $\signal$.
\begin{Featured Theorem}\label{Featured Theorem: recovery condition FW}
Let $\Phi$ be a dictionary of coherence $\mu$, and $\signal$ an $m$-sparse signal such that $ m < \tfrac{1}{2}(\mu^{-1} + 1)$. 
Then at each iteration, the Frank-Wolfe algorithm picks up an atom of the support of the signal.
\end{Featured Theorem}
Under the same condition, we also prove that the rate of convergence of the Frank-Wolfe algorithm is exponential beyond a certain iteration even though the function we consider is not strongly convex. This is given by Featured Theorem~\ref{Featured Theorem: exponential convergence FW}.
\begin{Featured Theorem}\label{Featured Theorem: exponential convergence FW} 
Let $\Phi$ be a dictionary of coherence $\mu$, and $\signal$ an $m$-sparse signal such that $ m < \tfrac{1}{2}(\mu^{-1} + 1)$. Under some conditions on $\signal$ and $\beta$, there exists an iteration $K$ of the Frank-Wolfe algorithm and $0 < \theta \leq 1$ such that:
$$\norme{\signal - \Phi x_{k+1}}^2_2 \leq \norme{\signal - \Phi x_k}^2_2 (1 - \theta) \qquad  \forall \ k \geq K$$
where $\theta$ depends on $\mu_1(m-1), \ \beta$, and $\signal$ and $0 < \theta \leq 1$
(which implies the exponential convergence).
\end{Featured Theorem}
\subsection{Organization of the Paper}
In Section~\ref{sec:algo}, we instantiate the Frank-Wolfe algorithm for Problem \eqref{equation: problem ExactSparse relaxation} and we relate it to MP and OMP. Section~\ref{sec:theo} is devoted
to the statement of our main results.  We probe their optimality with numerical experiments in Section~\ref{sec:expe}.


\section{MP, OMP and FW Algorithms}
\label{sec:algo}

This section recalls Matching Pursuit and Orthogonal Matching Pursuit, the classical greedy algorithms used to tackle \ExactSparse.
 We then present the Frank-Wolfe algorithm and derive it for Problem \eqref{equation: problem ExactSparse relaxation}, showing its similarities with MP and OMP.

\subsection{Matching Pursuit and Orthogonal Matching Pursuit}
Let $\Phi$ be an orthonormal basis and $\signal$ an exactly $m$-sparse signal (i.e. $\signal = \Phi x$ with $x$ having  exactly $m$ nonzero entries). In this case, the signal can be expressed as $$\signal = \sum\limits_{\stackrel{\lambda \in \Lambda}{|\Lambda|=m}} \ps{\signal, \varphi_{\lambda}} \varphi_{\lambda},$$ where $\Lambda$ is the index set of the $m$ atoms $\varphi$ that satisfy: $\ps{\signal, \varphi} \neq 0$.
The \ExactSparse problem has then an easy solution: one chooses the $m$ atoms having the nonzero inner products with the signal, and 
the linear expansion of $\signal$ with respect to these atoms can be obtained readily. 

Algorithmically, this can be achieved by building $\signal_k$, the approximation of $\signal$, one term at a time. Noting $y_k$ the current
 approximation and $r_k = \signal - \signal_k$ the so-called {\em residual}, we select at each time step the atom which has the largest inner product (this is a greedy selection) with $r_k$, and update the approximation. 

MP~\cite{MZ92} and OMP~\cite{PRK93} are two greedy algorithms used for approximating signals in the general case where the dictionary is not an orthonormal basis. They build upon this idea of greedy selection and iterative updates. MP and OMP initialize the first approximation $\signal_0 = 0$ and residual $r_0 = \signal$ and then repeat the following steps:
\begin{enumerate}
  \item Atom selection: $\quad\lambda_{k} = \argmax_{i} \va{ \ps{\varphi_i, r_k} }.$
  \item Approximation update:
  \begin{enumerate}
    \item MP: $\quad\signal_{k+1} = \signal_{k} + \ps{\varphi_{\lambda_{k}}, r_k}\varphi_{\lambda_{k}}$
    \item OMP: $\quad \signal_{k+1} = \argmin_{a \in \Span(\{\varphi_{\lambda_{0}}, \dots, \varphi_{\lambda_{k}} \})} \| \signal - a \|_2 $
  \end{enumerate}
  \item Residual update: $\quad r_{k+1} = \signal - \signal_{k+1}.$
\end{enumerate}

\subsection{Frank-Wolfe}
The Frank-Wolfe algorithm~\cite{FW56} is an iterative algorithm developed to solve the optimization problem:
\begin{equation}
\label{eq:FW}
\begin{split} 
  \min_{x \in \convexe} f(x)  \ \ \text{s.t. } x \in \convexe 
 \end{split} 
\end{equation}
where $f$ is a convex and continuously differentiable function and $\convexe$ is a compact and convex set. 
The Frank-Wolfe algorithm is initialized with an element of $\convexe$. Then, at iteration $k$, 
the algorithm applies the three following steps:
\begin{enumerate}
  \item Descent direction selection: $\quad s_k = \argmin_{s \in \convexe} \ps{s, \nabla f(x_k)}.$
  \item Step size optimization:  $\quad \gamma_k = \argmin_{\gamma \in [0, 1]} f((1-\gamma)x_k + \gamma s_k).$
  \item Update: $\quad x_{k+1} = (1-\gamma_k)x_k + \gamma_k s_k.$
\end{enumerate}
Note that the step-size $\gamma_k$ can be chosen by other methods~\cite{LKTJ17}, without affecting the convergence properties of the algorithm.

\subsection{Frank-Wolfe for the  \ExactSparse problem}
\label{section: FW for ExactSparse}
We are interested in solving  \ExactSparse  by finding the solution of the following problem:
\begin{equation}\tag{\ref{equation: problem ExactSparse relaxation}}
\begin{split} 
  \argmin_{x \in \mathbb{R}^n } \tfrac{1}{2}\norme{\signal - \Phi x}^2_2  \ \ \text{s.t. } \norme{ x }_1 \leq \beta 
 \end{split} 
\end{equation}
using the Frank-Wolfe algorithm. To this end, we instantiate~\eqref{eq:FW} for Problem~\eqref{equation: problem ExactSparse relaxation}:
$$\min_{x \in \convexe} f(x) = \tfrac{1}{2}\norme{\signal - \Phi x}^2_2,$$
%
$$\text{s.t. } x \in \Boule_1(\beta) = \{ x: \ \norme{x}_1 \leq \beta \}.$$ 
$\Boule_1(\beta)$ is the $\ell_1$ ball of radius $\beta$; it can be written as the convex hull: $\Boule_1(\beta) = \conv\{ \pm \beta e_i | i \in \{1,\dots,n \} \}$, with $e_i$ the canonical basis vectors of $\mathbb{R}^n$. Moreover, $\nabla f(x) = \Phi^{t}(\Phi x - \signal)$. The 
selection step of the Frank-Wolfe algorithm thus becomes:
\begin{align*}
s_k &= \argmin_{s \in \conv\{ \pm \beta e_i | i \in \{1,\dots,n \} \}} \ps{s, \Phi^{t}(\Phi x_k - \signal)}.
\end{align*}
Since this optimization problem is linear and $\Boule_1(\beta)$ is closed and bounded, there is always an extreme point of $\Boule_1(\beta)$ in the solution set (see~\cite{BV04} for more details), thus:
\begin{align*}
 s_k &= \argmin_{s \in \{ \pm \beta e_i | i \in \{1,\dots,n \} \}} \ps{s, \Phi^{t}(\Phi x_k - \signal)}
\end{align*}
or equivalently
\begin{align*}
s_k &= \argmax_{s \in \{ \pm \beta e_i | i \in \{1,\dots,n \} \}} \ps{\Phi s, \signal-\Phi x_k}.
\end{align*}
Noticing that  $s = \pm \beta e_i$ implies $\Phi s = \pm \beta \varphi_{i}$, we conclude that the direction selection step of the Frank-Wolfe algorithm for Problem \eqref{equation: problem ExactSparse relaxation} can be rewritten as:
\begin{align*} 
\begin{cases}
i_k &= \argmax_{i \in \{1,\dots,n \} } | \ps{  \varphi_i, \signal - \Phi x_k } | \\
s_k &= \sign(\ps{ \varphi_{i_k},\signal - \Phi x_k  })\beta e_{i_{k}}.
\end{cases}
\end{align*}
Recalling that the residual $r_k$ is: 
$$r_k = \signal - \Phi x_k,$$ 
we notice that we have the same atom selection as in MP and OMP: 
$$i_k=\lambda_k=\argmax_{i} | \ps{  \varphi_i, r_k } |. $$
Finally, we specify the initialization $x_0 = 0$ which is in $\Boule_1(\beta)$. This completes the description of the Frank-Wolfe algorithm for Problem \eqref{equation: problem ExactSparse relaxation} which is summarized in Algorithm~\ref{algorithm: Frank-Wolfe for ExactSparse}.
\begin{algorithm}[t] 
 \KwData{signal $\signal$, dictionary $\Phi = [\varphi_1, \dots, \varphi_n], \beta>0$.}
 $x_0 = 0 $\\
 $k = 0$\\
 \While{stopping criterion not verified}{
 $i_{k} = \argmax_{i \in \{1,\dots,n \} } | \ps{  \varphi_i,\signal - \Phi x_k  } | $\\ 
 $s_k = \sign(\ps{ \varphi_{i_k},\signal - \Phi x_k  })\beta e_{i_{k}}$\\
 $\gamma_k = \argmin_{\gamma \in [0,1]} \norme{ \signal - \Phi (x_k + \gamma(s_k - x_k)) }_2^2$ \\
 $x_{k+1} = x_k + \gamma_k(s_k - x_k)$\\
 $k = k + 1$\\
 }
 \caption{The Frank-Wolfe algorithm for Problem \eqref{equation: problem ExactSparse relaxation}}
 \label{algorithm: Frank-Wolfe for ExactSparse}
\end{algorithm}

In the sequel, we will be interested in the recovery and convergence properties of this algorithm when $ m < \tfrac{1}{2}(\mu^{-1} + 1)$. 
This hypothesis implies that the atoms  of any subset of at most $m$ atoms ($\{\varphi_i | i \in \Lambda\}$ such that $|\Lambda| \leq m$) are necessarily linearly independent and also that for any $m$-sparse signal $\signal$, the expansion coefficients $x^*$ such that $\signal = \Phi x^*$ and $\norme{x^*}_0\leq m$ and the corresponding support are unique \cite{T04}.

In that case, $x^*$ is the unique solution of the \ExactSparse problem and also of Problem~\eqref{pb exactSparse} but not always a solution of  Problem~\eqref{equation: problem ExactSparse relaxation}. Indeed, we always have  $f(x^*)=0$, so if $\norme{x^*}_1 \leq \beta$ then  $x^*$ is a solution of  Problem~\eqref{equation: problem ExactSparse relaxation}, but if $\norme{x^*}_1 > \beta$ then  $x^*$ is not feasible for Problem~\eqref{equation: problem ExactSparse relaxation} hence it is not a solution.

 
Now, let us clarify some notations. For an $m$-sparse signal $\signal = \Phi x^*$, we denote by $\Lambda_{opt}$ its support i.e. $\signal = \sum_{i \in \Lambda_{opt}} x^*[i] \varphi_i$ such that $|\Lambda_{opt}| = m$.
For $\Lambda$ a subset of $\{1, \dots,n \}$, we denote by $\Phi_{\Lambda}$ the matrix whose columns are the atoms indexed by $\Lambda$. When $\Lambda$ is the support $\Lambda_{opt}$, we note $\lambda_{min}^{*}$ (resp. $\lambda_{max}^{*}$) its lowest (resp. largest) singular value. 
For a matrix $\Phi$ we denote by $\Span(\Phi)$ the vector space spanned by its columns. Finally, when we study the convergence of Algorithm~\ref{algorithm: Frank-Wolfe for ExactSparse}, we consider the residual squared norm $\norme{r_k}^2_2$, tied to the objective function as follows: 
$$ f(x_k) = \tfrac{1}{2} \norme{\signal - \Phi x_k}^2_2 =  \tfrac{1}{2} \norme{r_k}^2_2. $$


\section{Results: Exact recovery and exponential convergence}
\label{sec:theo}

In this section, we state our main results on the recovery property and the convergence rate of Algorithm~\ref{algorithm: Frank-Wolfe for ExactSparse}. We state in Theorem~\ref{theorem: recovery for FW} the recovery guarantees of this algorithm, and we present its convergence rate in Theorem~\ref{theorem: exponential convergence FW}.

\subsection{Recovery condition}
Tropp~\cite{T04} proved that when $ m < \tfrac{1}{2}(\mu^{-1} + 1)$, then OMP exactly recovers the $m$-expansion of any $m$-sparse signal. 
Gribonval and Vandergheynst~\cite{GV06} proved that the approximated signal constructed by MP algorithm converges to the initial signal. 
To do so, they prove that at each step, MP and OMP select an atom of the support. Theorem~\ref{theorem: recovery for FW} extends this result to the Frank-Wolfe algorithm.

\begin{Theorem}
\label{theorem: recovery for FW}
Let $\Phi$ be a dictionary, $\mu$ its coherence, and $\signal$ an $m$-sparse signal of support $\Lambda_{opt}$. 
If $ m < \tfrac{1}{2}(\mu^{-1} + 1) $, then at each iteration, Algorithm~\ref{algorithm: Frank-Wolfe for ExactSparse} picks up a correct atom, i.e. $\forall \ k$, $ i_k \in \Lambda_{opt}$.
\end{Theorem}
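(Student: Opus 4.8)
The plan is to reduce the claim to the classical exact-recovery argument for greedy pursuits, exploiting the observation from Section~\ref{section: FW for ExactSparse} that the Frank--Wolfe selection step $i_k = \argmax_i \va{\ps{\varphi_i, \signal - \Phi x_k}}$ is \emph{exactly} the max-correlation rule of MP and OMP applied to the residual $r_k = \signal - \Phi x_k$. The whole statement then follows from an induction on $k$ whose invariant is that the iterate stays supported on the optimal support, i.e. $\operatorname{supp}(x_k) \subseteq \Lambda_{opt}$, equivalently $r_k \in \Span(\Phi_{\Lambda_{opt}})$. The base case is immediate since $x_0 = 0$ and $\signal \in \Span(\Phi_{\Lambda_{opt}})$.

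For the inductive step I would first establish the recovery lemma: if $r_k \in \Span(\Phi_{\Lambda_{opt}})$ and $r_k \neq 0$, then the maximal correlation is attained inside $\Lambda_{opt}$. Writing $r_k = \Phi_{\Lambda_{opt}} c = \sum_{j \in \Lambda_{opt}} c_j \varphi_j$ with $c \neq 0$, I would bound the off-support correlations from above, using $\va{\ps{\varphi_i, r_k}} \le \sum_{j\in\Lambda_{opt}} \va{c_j}\va{\ps{\varphi_i,\varphi_j}}$ and the definition of the Babel function, as
$$\max_{i \notin \Lambda_{opt}} \va{\ps{\varphi_i, r_k}} \leq \norme{c}_\infty \, \mu_1(m).$$
Then I would bound the on-support correlations from below: picking $\ell \in \Lambda_{opt}$ with $\va{c_\ell} = \norme{c}_\infty$ and isolating the diagonal term $\ps{\varphi_\ell,\varphi_\ell}=1$ gives
$$\max_{\ell \in \Lambda_{opt}} \va{\ps{\varphi_\ell, r_k}} \ge \va{c_\ell} - \sum_{j \neq \ell}\va{c_j}\va{\ps{\varphi_\ell,\varphi_j}} \ge \norme{c}_\infty\bigl(1 - \mu_1(m-1)\bigr),$$
where the last inequality applies $\mu_1(m-1)$ to $\Lambda_{opt}\setminus\{\ell\}$ with outside index $\ell$. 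Dividing, the ratio of the off-support to the on-support maximal correlation is at most $\mu_1(m)/(1 - \mu_1(m-1))$, independently of $c$. This is strictly below $1$ precisely when $\mu_1(m) + \mu_1(m-1) < 1$, which the hypothesis secures: $m < \tfrac12(\mu^{-1}+1)$ rewrites as $(2m-1)\mu < 1$, and since $\mu_1(p) \le p\mu$ one gets $\mu_1(m)+\mu_1(m-1) \le (2m-1)\mu < 1$ (this also yields $1 - \mu_1(m-1) > 0$, so the denominator is positive). Hence every maximizer $i_k$ lies in $\Lambda_{opt}$.

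It then remains to propagate the invariant. The descent vertex $s_k = \pm\beta e_{i_k}$ is, by the recovery lemma, supported on $\{i_k\}\subseteq\Lambda_{opt}$, and the update $x_{k+1}=x_k+\gamma_k(s_k-x_k)=(1-\gamma_k)x_k+\gamma_k s_k$ is a convex combination of two vectors supported on $\Lambda_{opt}$. Therefore $\operatorname{supp}(x_{k+1})\subseteq\Lambda_{opt}$ and $r_{k+1}=\signal-\Phi x_{k+1}\in\Span(\Phi_{\Lambda_{opt}})$, which closes the induction. (If ever $r_k=0$ the algorithm has already converged and there is nothing to select.)

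I expect the recovery lemma to be the only real obstacle, and within it the delicate point is the lower bound on the on-support correlation: one must peel off the diagonal contribution at the index realizing $\norme{c}_\infty$ so that the $\norme{c}_\infty$ factors cancel and the ratio collapses to the coefficient-free quantity $\mu_1(m)/(1-\mu_1(m-1))$. The remaining ingredients---the conversion of $m < \tfrac12(\mu^{-1}+1)$ into $\mu_1(m)+\mu_1(m-1)<1$ via $\mu_1(p)\le p\mu$, and the stability of the support under the convex update---are routine.
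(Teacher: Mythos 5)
Your proof is correct, and its skeleton is the same as the paper's: an induction on $k$ with the invariant $r_k \in \Span(\Phi_{\Lambda_{opt}})$, the base case $r_0 = \signal$, and propagation of the invariant through the convex update $x_{k+1}=(1-\gamma_k)x_k+\gamma_k s_k$. Where you genuinely depart is in the key step. The paper does not prove the correlation-ratio bound; it cites Tropp's Theorem 3.1 machinery, namely that $r_k \in \Span(\Phi_{\Lambda_{opt}})$ implies $\rho(r_k) \leq \max_{i \notin \Lambda_{opt}} \norme{\Phi_{\Lambda_{opt}}^{+}\varphi_i}_1 \leq \tfrac{\mu_1(m)}{1-\mu_1(m-1)}$, together with the cited implication that $m < \tfrac{1}{2}(\mu^{-1}+1)$ makes this quantity smaller than $1$. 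You instead prove the same bound $\tfrac{\mu_1(m)}{1-\mu_1(m-1)}$ from scratch, bypassing the pseudoinverse/ERC intermediary entirely: you expand $r_k = \Phi_{\Lambda_{opt}}c$, bound the off-support correlations by $\norme{c}_\infty\,\mu_1(m)$, lower-bound the on-support maximum by $\norme{c}_\infty\,(1-\mu_1(m-1))$ by peeling off the diagonal term at an index achieving $\norme{c}_\infty$, and convert the sparsity hypothesis via $\mu_1(p)\leq p\mu$ into $\mu_1(m)+\mu_1(m-1)<1$ (which also gives positivity of the denominator, so the division is legitimate, and strictness of the ratio bound, so every maximizer lies in $\Lambda_{opt}$). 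All of these estimates check out. What your route buys is a fully self-contained and elementary proof; what the paper's route buys is brevity and the ERC $\max_{i \notin \Lambda_{opt}} \norme{\Phi_{\Lambda_{opt}}^{+}\varphi_i}_1 < 1$ as an explicit intermediate condition, which the paper then reuses in its remark that the coherence hypothesis can be replaced by the ERC --- a variant your argument does not directly yield, since you never relate the correlation ratio to the pseudoinverse. Minor additional point in your favor: you handle the degenerate case $r_k=0$ explicitly, which the paper (whose quantity $\rho(r_k)$ is undefined there) leaves implicit.
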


\begin{remark}[ERC]
As in~\cite{T04,GV06}, the condition $ m < \tfrac{1}{2}(\mu^{-1} + 1) $ can be replaced by a 
support-specific condition called the exact recovery condition (ERC): $ \max\limits_{i \notin \Lambda_{opt}} \| \Phi_{\Lambda_{opt}}^{+} \varphi_i \|_1 < 1 $, where $\Phi_{\Lambda_{opt}}^{+}$ is the pseudoinverse of the matrix $\Phi_{\Lambda_{opt}}$.
ERC is not easy to check in practice because it depends on the unknown support $\Lambda_{opt}$, while the condition
 $m < \tfrac{1}{2}(\mu^{-1} + 1) $, a sufficient condition for ERC~\cite{T04} to hold, is easy to check.
\end{remark}

\begin{proof}[Proof of Theorem~\ref{theorem: recovery for FW}]
The proof of this theorem is very similar to the proof of Theorem 3.1 in~\cite{T04}.
One shows by induction that at each step the residual $r_k = \signal - \Phi x_k$ remains in $\Span(\Phi_{\lambda_{opt}})$ and in the process that the selected atom is in $\Lambda_{opt}$. 
\begin{itemize}
  \item $k = 0$: by definition $r_0 = \signal$ is in $\Span(\Phi_{\lambda_{opt}})$.
  \item If $k \geq 0$: we assume that $r_k \in \Span(\Phi_{\Lambda_{opt}})$. Let $\lambda = \{1, \dots, n\} \setminus \lambda_{opt}$ be the set of atoms which are not in the support of the signal.
  The atom $\varphi_{i_k}$ is a ``good atom'' (i.e. $i_{k} \in \Lambda_{opt}$), if and only if:
$$ \rho(r_k) =  \frac{\norme{ \Phi_{\lambda}^t r_k }_{\infty}}{\norme{\matricesupport^t r_k }_{\infty}} < 1.$$
Tropp~\cite{T04} proved that if $r_k \in \Span(\Phi_{\Lambda_{opt}})$ then $ \rho(r_k) \leq \max\limits_{i \notin \Lambda_{opt}} \| \Phi_{\Lambda_{opt}}^{+} \varphi_i \|_1 \leq \frac{\mu_1(m)}{1 - \mu_1(m-1)}$, where $ \Phi_{\Lambda_{opt}}^{+}$ is the pseudoinverse of the matrix $ \Phi_{\Lambda_{opt}}$. He also proved that $ m < \tfrac{1}{2}(\mu^{-1} + 1) $ implies $ \frac{\mu_1(m)}{1 - \mu_1(m-1)}<1$.

Hence $i_k$ is in $\Lambda_{opt}$ and $s_k=\pm\beta e_{i_k}$ is thus in $\Span(\Phi_{\Lambda_{opt}})$. Since $r_{k+1} = r_k - \gamma_k \Phi (s_k - x_k)$, and since by assumption $r_k$ is also in $\Span(\Phi_{\Lambda_{opt}})$, we deduce that $r_{k+1}$ is in $\Span(\Phi_{\Lambda_{opt}})$.
\end{itemize}

\end{proof}

Theorem~\ref{theorem: recovery for FW} specifies that if the signal has a sparsity $m$ smaller than $\tfrac{1}{2}(\mu^{-1} + 1) $, Algorithm~\ref{algorithm: Frank-Wolfe for ExactSparse} only recruits atoms of the support. As noted is Section \ref{section: FW for ExactSparse}, the expansion $x^*$ can not always be reached (because it might be the case that $\norme{x^*}_1 > \beta$). In the case when it can be reached (i.e. when $\norme{x^*}_1 \leq \beta$) one can furthermore prove that the expansion $x^*$ itself is recovered:

\begin{Corollary}
\label{cor:convergence}
Let $\Phi$ be a dictionary, $\mu$ its coherence, and $\signal$ an $m$-sparse signal of support $\Lambda_{opt}$. 
If $ m < \tfrac{1}{2}(\mu^{-1} + 1) $ and $\norme{x^*}_1 \leq \beta$ then the sequence $x_k$ converges to $x^*$ (i.e. Algorithm~\ref{algorithm: Frank-Wolfe for ExactSparse} exactly recovers the $m$-expansion of $\signal$).
\end{Corollary}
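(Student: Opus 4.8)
The plan is to reduce the Frank-Wolfe dynamics to the low-dimensional subspace spanned by the support and to exploit there both the feasibility of $x^*$ and the injectivity of $\Phi_{\Lambda_{opt}}$. First I would observe that every iterate of Algorithm~\ref{algorithm: Frank-Wolfe for ExactSparse} is supported on $\Lambda_{opt}$. Indeed, $x_0 = 0$ is trivially supported on $\Lambda_{opt}$, and by Theorem~\ref{theorem: recovery for FW} each selected vertex $s_k = \pm\beta e_{i_k}$ satisfies $i_k \in \Lambda_{opt}$; since $x_{k+1} = (1-\gamma_k)x_k + \gamma_k s_k$ is a convex combination of two vectors supported on $\Lambda_{opt}$, an immediate induction shows that all $x_k$ are supported on $\Lambda_{opt}$. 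Consequently the algorithm coincides, iterate for iterate, with Frank-Wolfe applied to the restricted problem $\min \tfrac{1}{2}\norme{\signal - \Phi_{\Lambda_{opt}} z}^2_2$ over the $\ell_1$ ball $\{z : \norme{z}_1 \leq \beta\}$ of $\mathbb{R}^{|\Lambda_{opt}|}$, whose vertices are exactly the $\pm\beta e_i$, $i \in \Lambda_{opt}$, that the selection step picks.

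Next I would identify the optimal value of this restricted problem. Since $f \geq 0$ everywhere and, by the hypothesis $\norme{x^*}_1 \leq \beta$, the vector $x^*$ is feasible and attains $f(x^*) = \tfrac{1}{2}\norme{\signal - \Phi x^*}^2_2 = 0$, the minimum of $f$ over the restricted feasible set is $0$. The objective $f$ is convex and has Lipschitz-continuous gradient $\Phi^t(\Phi x - \signal)$, and the feasible set is compact and convex, so the classical Frank-Wolfe convergence guarantee~\cite{FW56} yields $f(x_k) \to 0$, that is $\norme{\signal - \Phi x_k}_2 \to 0$. Note that for the corollary we only need plain convergence, not a rate, so the generic $O(1/k)$ bound suffices and no strong convexity of the global objective is invoked.

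Finally I would transfer this convergence from the image space back to the coefficient space. Because $m < \tfrac{1}{2}(\mu^{-1}+1)$, the atoms indexed by $\Lambda_{opt}$ are linearly independent, so $\Phi_{\Lambda_{opt}}$ has full column rank and its smallest singular value satisfies $\lambda_{min}^{*} > 0$. As both $x_k$ and $x^*$ are supported on $\Lambda_{opt}$, we have $\Phi x_k - \signal = \Phi_{\Lambda_{opt}}(x_k - x^*)_{\Lambda_{opt}}$, whence $\lambda_{min}^{*}\,\norme{x_k - x^*}_2 \leq \norme{\Phi x_k - \signal}_2 \to 0$; since $\lambda_{min}^{*} > 0$ this forces $x_k \to x^*$. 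The same injectivity shows $x^*$ is in fact the unique minimizer of the restricted problem, so the limit is unambiguous.

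The main obstacle is conceptual rather than computational: the objective on $\mathbb{R}^n$ is not strongly convex when the dictionary is overcomplete, so one cannot read off convergence to $x^*$ from a generic Frank-Wolfe analysis on the full ambient ball. The entire argument hinges on using Theorem~\ref{theorem: recovery for FW} to confine the iterates to the support; once that reduction is justified, the two remaining ingredients---that $x^*$ is a feasible minimizer of value $0$ (which is where the assumption $\norme{x^*}_1 \leq \beta$ is essential) and that convergence of $\Phi x_k$ forces convergence of $x_k$ through the positivity of $\lambda_{min}^{*}$---are routine.
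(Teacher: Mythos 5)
Your proof is correct and takes essentially the same route as the paper's: Theorem~\ref{theorem: recovery for FW} confines the iterates to the support, the standard Frank-Wolfe objective-value convergence together with the feasibility of $x^*$ (so the optimal value is $0$) gives $f(x_k)\to 0$, and the inequality $\norme{\Phi x_k - \signal}_2 \geq \lambda_{min}^{*}\norme{x_k - x^*}_2$ with $\lambda_{min}^{*}>0$ transfers this to $x_k \to x^*$. Your intermediate recasting of the algorithm as Frank-Wolfe on the restricted problem over $\mathbb{R}^{|\Lambda_{opt}|}$ is a harmless but unnecessary detour, since the generic convergence guarantee applies directly to the full problem, whose optimal value is also $0$.
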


\begin{proof}[Proof of Corollary~\ref{cor:convergence}]
$\norme{x^*}_1 \leq \beta$ so that $x^*$ is a solution of Problem~\eqref{equation: problem ExactSparse relaxation}. The Frank-Wolfe algorithm is known to converge in terms of objective values ($f(x_k)$), we deduce that $f(x_k)$ converges to  $f(x^*)=0$. Since Theorem~\ref{theorem: recovery for FW} ensures that the iterates $x_k$ are in $\Span(\Phi_{\Lambda_{opt}})$, we also have convergence of the iterates ($x_k$ converge to $x^*$) since
\begin{align*} 
     |f(x_k)-f(x^*)| & =  |f(x_k) - 0| = \tfrac{1}{2} \norme{\signal - \Phi x_k}_2^2 \\ 
     & = \tfrac{1}{2} \norme{\Phi x^* - \Phi x_k}_2^2 \\
     & \geq \tfrac{1}{2} \left(\lambda_{min}^{*}\right)^2 \|x^*- x_k\|_2^2 
\end{align*}
where the last line holds because $x_k-x^*$ is in $\Span(\Phi_{\Lambda_{opt}})$
and $\lambda_{min}^{*}>0$ since  $ m < \tfrac{1}{2}(\mu^{-1} + 1) $~\cite{T04}. 
\end{proof}
In this section, we presented the recovery guarantees for the Frank-Wolfe algorithm. In the next section, we will show that the convergence rate of the Frank-Wolfe algorithm is exponential when $ m < \tfrac{1}{2}(\mu^{-1} + 1) $ and $\beta$ is large enough so that the expansion $x^*$ is recovered.

\subsection{Rate of convergence}
As mentioned in the introduction, in the generic case of Problem~\eqref{eq:FW}, the Frank-Wolfe algorithm converges exponentially beyond a certain iteration when the objective function is strongly convex~\cite{GM86} and linearly in the other cases~\cite{FW56}. We prove in Theorem~\ref{theorem: exponential convergence FW}, that when $ m < \tfrac{1}{2}(\mu^{-1} + 1) $, the Frank-Wolfe algorithm converges exponentially beyond a certain iteration even though the function we consider is {\em not} strongly convex.

\begin{Theorem}
\label{theorem: exponential convergence FW}
Let $\Phi$ be a dictionary, $\mu$ its coherence, $\mu_1$ its Babel function, and $y = \Phi x^*$ an $m$-sparse signal. \\
If $ m < \tfrac{1}{2}(\mu^{-1} + 1) $ and $\norme{x^*}_1 < \beta$, then there exists $K$ such that for all iterations $k \geq K$ of Algorithm~\ref{algorithm: Frank-Wolfe for ExactSparse}, we have:
$$ \norme{ r_{k+1} }_2^2 \leq \norme{ r_{k} }_2^2 (1 - \theta)$$
where 
$$\theta = \tfrac{1}{16}\left(\tfrac{1 - \mu_1(m-1)}{m}\right)\left(1 - \tfrac{\norme{x^*}_1}{\beta}\right)^2 .$$
\end{Theorem}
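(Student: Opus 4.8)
The plan is to lean on the exact line search of Algorithm~\ref{algorithm: Frank-Wolfe for ExactSparse}. Writing $d_k = \Phi(s_k - x_k)$ for the search direction in signal space and $g_k = \ps{r_k, d_k}$ for the associated Frank-Wolfe gap, expanding $f$ along the segment gives $\norme{r_{k+1}}_2^2 = \norme{r_k}_2^2 - 2\gamma_k g_k + \gamma_k^2\norme{d_k}_2^2$. When the unconstrained optimum $\gamma_k^* = g_k/\norme{d_k}_2^2$ lies in $[0,1]$, the line search picks it and the decrease becomes exactly $\norme{r_{k+1}}_2^2 = \norme{r_k}_2^2 - g_k^2/\norme{d_k}_2^2$. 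The theorem then reduces to establishing, beyond some index $K$, that $\gamma_k^* \le 1$ (so this clean formula applies) and that $g_k^2/(\norme{d_k}_2^2\,\norme{r_k}_2^2) \ge \theta$.

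First I would lower-bound the gap. Since $s_k = \sign(\ps{\varphi_{i_k}, r_k})\beta e_{i_k}$ with $i_k$ maximizing $\va{\ps{\varphi_i, r_k}}$, we have $\ps{r_k, \Phi s_k} = \beta\norme{\Phi^t r_k}_\infty$, while $\ps{r_k, \Phi x_k} = \ps{r_k, \signal} - \norme{r_k}_2^2$ and $\ps{r_k, \signal} = \ps{\Phi^t r_k, x^*}$. Hölder's inequality $\va{\ps{\Phi^t r_k, x^*}} \le \norme{\Phi^t r_k}_\infty\norme{x^*}_1$ then yields $g_k \ge \norme{\Phi^t r_k}_\infty(\beta - \norme{x^*}_1) + \norme{r_k}_2^2 \ge \norme{\Phi^t r_k}_\infty(\beta - \norme{x^*}_1)$, where $\beta - \norme{x^*}_1 > 0$ by hypothesis. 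Theorem~\ref{theorem: recovery for FW} is crucial here, as it guarantees $r_k \in \Span(\Phi_{\Lambda_{opt}})$: writing $r_k = \Phi_{\Lambda_{opt}} c$ and using $\ps{\Phi_{\Lambda_{opt}}^t r_k, c} = \norme{r_k}_2^2$ with $\norme{c}_2 \le \norme{r_k}_2/\lambda_{min}^*$ gives $\norme{\Phi_{\Lambda_{opt}}^t r_k}_2 \ge \lambda_{min}^*\norme{r_k}_2$, and since $\va{\Lambda_{opt}} = m$ we get $\norme{\Phi^t r_k}_\infty \ge m^{-1/2}\norme{\Phi_{\Lambda_{opt}}^t r_k}_2 \ge m^{-1/2}\lambda_{min}^*\norme{r_k}_2$. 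Altogether $g_k \ge m^{-1/2}\lambda_{min}^*(\beta - \norme{x^*}_1)\norme{r_k}_2$.

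Next I would control $\norme{d_k}_2$ from both sides. For the lower bound, combining $g_k \le \norme{r_k}_2\norme{d_k}_2$ with the gap estimate gives $\norme{d_k}_2 \ge m^{-1/2}\lambda_{min}^*(\beta - \norme{x^*}_1) =: \delta > 0$, a constant independent of $k$; since Corollary~\ref{cor:convergence} ensures $\norme{r_k}_2 \to 0$, the bound $\gamma_k^* \le \norme{r_k}_2/\norme{d_k}_2 \le \norme{r_k}_2/\delta$ eventually falls below $1$, which furnishes $K$. For the upper bound, both $s_k$ and (by recovery) $x_k$ are supported on $\Lambda_{opt}$ with $\norme{s_k - x_k}_1 \le 2\beta$, so $\norme{d_k}_2 \le \lambda_{max}^*\norme{s_k - x_k}_2 \le 2\beta\lambda_{max}^*$. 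Substituting into $g_k^2/\norme{d_k}_2^2$ and invoking the Gershgorin estimates $(\lambda_{min}^*)^2 \ge 1 - \mu_1(m-1)$ and $(\lambda_{max}^*)^2 \le 1 + \mu_1(m-1) \le 2$ on the Gram matrix of $\Phi_{\Lambda_{opt}}$ eliminates the singular values and produces the announced $\theta$.

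The main obstacle is the interior-step argument for $k \ge K$: this is precisely where the strict inequality $\norme{x^*}_1 < \beta$ is indispensable, since it keeps $\delta$ (hence $\norme{d_k}_2$) bounded away from $0$ while $\norme{r_k}_2 \to 0$. The second delicate point is the quantitative chain converting $\norme{\Phi^t r_k}_\infty$ into $\norme{r_k}_2$ and $\lambda_{min}^*,\lambda_{max}^*$ into $\mu_1(m-1)$; everything after that is routine algebra on the quadratic. I would also track the $\lambda_{max}^*$ bound carefully, as that is the only place where a numerical factor needs attention in order to match the stated constant $\tfrac{1}{16}$ exactly.
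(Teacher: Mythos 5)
Your proposal is correct, and it reaches the theorem by a genuinely different route than the paper's own proof, even though both share the same skeleton: reduce to the exact-line-search identity $\norme{r_{k+1}}_2^2 = \norme{r_k}_2^2 - \ps{\Phi(s_k-x_k),r_k}^2/\norme{\Phi(s_k-x_k)}_2^2$ and then bound the ratio. The differences lie in the two key technical steps. First, to lower-bound the inner product, the paper constructs an explicit feasible competitor $x_k+u\in\Boule_1(\beta)$ supported on $\Lambda_{opt}$; feasibility of $x_k+u$ forces it to wait until $\norme{x_k-x^*}_1\leq\epsilon=(\beta-\norme{x^*}_1)/2$ (this is where its $K$ comes from, via Corollary~\ref{cor:convergence}) and costs half the slack. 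You instead compute the gap directly, $\ps{\Phi(s_k-x_k),r_k}=\beta\norme{\Phi^t r_k}_\infty-\ps{r_k,\signal}+\norme{r_k}_2^2$, and apply H\"older against $x^*$, which is valid at \emph{every} iteration and keeps the full slack $\beta-\norme{x^*}_1$; this is essentially the mechanism the paper itself uses later in the proof of Theorem~\ref{theorem: minimal value for beta}, with $x_k$ in place of $x^*$. Second, where the paper invokes Lemma~\ref{lemma: expression of gamma} (a contradiction argument ruling out $\gamma_k=1$, valid from the first iteration) to justify the unconstrained line-search formula, you argue quantitatively: $\gamma_k^*\leq\norme{r_k}_2/\delta$ with $\delta=m^{-1/2}\lambda_{min}^*(\beta-\norme{x^*}_1)$ a uniform lower bound on $\norme{\Phi(s_k-x_k)}_2$, and $\norme{r_k}_2\to 0$, which is where your $K$ comes from. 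Your route thus dispenses with both the auxiliary vector $u$ and Lemma~\ref{lemma: expression of gamma}, and it yields a \emph{stronger} constant: carried through, your bounds give a decrease factor $\tfrac18\left(\tfrac{1-\mu_1(m-1)}{m}\right)\left(1-\tfrac{\norme{x^*}_1}{\beta}\right)^2=2\theta$ even after the $\sqrt2$ loss from $(\lambda_{max}^*)^2\leq 2$ (using the paper's bound $\norme{\Phi v}_2\leq\norme{v}_1$ instead of $\lambda_{max}^*$ would give $4\theta$), and since $1-2\theta\leq 1-\theta$ this implies the stated inequality, so there is no need to ``match $\tfrac{1}{16}$ exactly.'' Two minor points to tidy up: treat the degenerate case $r_k=0$ separately (your derivation of $\norme{\Phi(s_k-x_k)}_2\geq\delta$ divides by $\norme{r_k}_2$; the paper handles this as its $\gamma_k=0$ case), and be aware that your interior-step argument is only asymptotic, whereas the paper's Lemma~\ref{lemma: expression of gamma} holds at every iteration -- a property the paper reuses to get convergence from the first iteration in Theorem~\ref{theorem: minimal value for beta}, but which is not needed for the statement at hand.
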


The proof of Theorem~\ref{theorem: exponential convergence FW} is available in Appendix~\ref{sec:proofTh2}.

\begin{remark}
Note that $0 < \theta \leq 1$. Indeed, $ m < \tfrac{1}{2}(\mu^{-1} + 1) $ implies $0<\mu_1(m-1)<1$ so that $ 0 < \tfrac{1}{16m}(1 - \mu_1(m-1))\left(1 - \tfrac{\norme{x^*}_1}{\beta}\right)^2 \leq 1$ i.e. $0 < \theta \leq 1$. Thus, Theorem~\ref{theorem: exponential convergence FW} shows exponential convergence.
\end{remark}
\begin{remark}
As for Theorem~\ref{theorem: recovery for FW}, the same result holds when the Exact Recovery Condition (ERC), $\mu_1(m-1) < 1$ and $\norme{x^*}_1<\beta$ hold.
\end{remark}
\begin{remark}
As we said above, the objective function that we consider is not strongly convex, but since the constructed iterates $x_k$ remains in the $\Span (\Phi_{\Lambda_{opt}})$, the function takes the form of a $\lambda_{min}^{*}$-strongly convex function.
\end{remark}

A natural question that comes from  Theorem~\ref{theorem: exponential convergence FW} and from the convergence rate of MP and OMP is whether it is possible to guarantee the exponential convergence from the first iteration. The following theorem proves that this is possible if $\beta$ is large enough.

\begin{Theorem}
\label{theorem: minimal value for beta}
Let $\Phi$ be a dictionary of coherence $\mu$ and $y$ an $m$-sparse signal. If $m < \tfrac{1}{2}(\mu^{-1} + 1)$ and 
$$\beta > 2\norme{\signal}_2\sqrt{\tfrac{m}{1 - \mu_1(m-1)}} ,$$
then Algorithm~\ref{algorithm: Frank-Wolfe for ExactSparse} converges exponentially from the first iteration.
\end{Theorem}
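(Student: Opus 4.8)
The plan is to show that the enlarged radius forces the iteration $K$ produced by Theorem~\ref{theorem: exponential convergence FW} to be $0$, so that the geometric decrease $\norme{r_{k+1}}_2^2 \le (1-\theta)\norme{r_k}_2^2$ holds from the very first iteration. Concretely, I would re-run the one-step Frank-Wolfe decrease estimate and check that every quantity entering it is already under control at $k=0$. The first ingredient is to turn the hypothesis on $\beta$ into a margin condition on $\norme{x^*}_1$. Since $\signal = \Phi_{\Lambda_{opt}}x^*_{\Lambda_{opt}}$ and the Babel function bounds the least singular value through $(\lambda_{min}^{*})^2 \ge 1-\mu_1(m-1)$, one has $\norme{\signal}_2 \ge \lambda_{min}^{*}\norme{x^*}_2$ and $\norme{x^*}_1 \le \sqrt{m}\,\norme{x^*}_2$, hence $\norme{x^*}_1 \le \norme{\signal}_2\sqrt{m/(1-\mu_1(m-1))}$. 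The assumption $\beta > 2\norme{\signal}_2\sqrt{m/(1-\mu_1(m-1))}$ then gives both $\norme{x^*}_1 < \beta/2$ and $\beta > 2\norme{\signal}_2$: a strict-interior condition with a definite margin.

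Next I would invoke Theorem~\ref{theorem: recovery for FW}, which confines the whole trajectory: $r_k \in \Span(\Phi_{\Lambda_{opt}})$ and $x^*-x_k$ is supported on $\Lambda_{opt}$ for every $k$. Writing the Frank-Wolfe gap as $g_k = \ps{r_k, \Phi(s_k-x_k)} = \beta\norme{\Phi^t r_k}_\infty - \ps{\Phi^t r_k, x^*} + \norme{r_k}_2^2$ and using $\ps{\Phi^t r_k, x^*} \le \norme{x^*}_1\norme{\Phi^t r_k}_\infty$ together with the support-confinement inequality $\norme{\Phi^t r_k}_\infty \ge \tfrac{\lambda_{min}^{*}}{\sqrt m}\norme{r_k}_2$ (obtained from $\norme{r_k}_2^2 \le \norme{x^*-x_k}_1\norme{\Phi^t r_k}_\infty$ and $\norme{x^*-x_k}_1 \le \tfrac{\sqrt m}{\lambda_{min}^{*}}\norme{r_k}_2$), I get the linear lower bound $g_k \ge (\beta-\norme{x^*}_1)\tfrac{\lambda_{min}^{*}}{\sqrt m}\norme{r_k}_2$. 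The same algebra shows $\norme{\Phi(s_k-x_k)}_2^2 - g_k = \norme{\Phi s_k - \signal}_2^2 + \beta\norme{\Phi^t r_k}_\infty - \ps{\Phi^t r_k, x^*} \ge \norme{\Phi s_k - \signal}_2^2 + (\beta-\norme{x^*}_1)\norme{\Phi^t r_k}_\infty \ge 0$, so the line search is never clipped at the endpoint $1$ and the exact decrease $\norme{r_{k+1}}_2^2 = \norme{r_k}_2^2 - g_k^2/\norme{\Phi(s_k-x_k)}_2^2$ is available at every iteration.

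I would then close the estimate with the elementary bound $\norme{\Phi(s_k-x_k)}_2 \le \norme{\Phi s_k-\signal}_2 + \norme{r_k}_2 \le \beta + 2\norme{\signal}_2 \le 2\beta$, the last inequality being exactly where $\beta > 2\norme{\signal}_2$ is used. Substituting the gap bound and $(\lambda_{min}^{*})^2 \ge 1-\mu_1(m-1)$ gives a $\beta$-free lower bound on the relative decrease of the order $\tfrac{1-\mu_1(m-1)}{m}\bigl(1-\norme{x^*}_1/\beta\bigr)^2$, which the margin $\norme{x^*}_1<\beta/2$ keeps bounded away from $0$. Since nothing in this chain of inequalities requires $k$ to be large, the decrease holds for all $k \ge 0$, which is precisely the claim.

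The hard part is the linear lower bound on the Frank-Wolfe gap, $g_k \gtrsim \norme{r_k}_2$: a generic Frank-Wolfe analysis only furnishes the quadratic estimate $g_k \ge f(x_k) \sim \norme{r_k}_2^2$, which yields merely a sublinear rate. Upgrading it to a linear bound is what converts the step decrease into a genuinely geometric one, and it rests essentially on the support confinement of Theorem~\ref{theorem: recovery for FW} together with the Babel bound on $\lambda_{min}^{*}$. The secondary subtlety is that the upper bound on $\norme{\Phi(s_k-x_k)}_2$ must be reduced to a constant multiple of $\beta$ uniformly in $k$; this is what the factor $2$ in the hypothesis on $\beta$ secures, and it is the feature that distinguishes Theorem~\ref{theorem: minimal value for beta} from Theorem~\ref{theorem: exponential convergence FW}.
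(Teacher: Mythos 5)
Your proof is correct, and it reaches the theorem by a genuinely different route than the paper. The paper's declared key step is Lemma~\ref{lemma: $x_k$ bounded by $x^*$}, the uniform iterate bound $\norme{x_k}_1 \leq 2\norme{\signal}_2\sqrt{m/(1-\mu_1(m-1))}$, which is fed into $\ps{\Phi x_k, r_k} \leq \norme{x_k}_1 \norme{\Phi^t r_k}_\infty$; the hypothesis on $\beta$ is then exactly what keeps $1 - \norme{x_k}_1/\beta$ away from zero. You never need that lemma: by writing $\ps{\Phi x_k, r_k} = \ps{\Phi^t r_k, x^*} - \norme{r_k}_2^2$ you use $x^*$ rather than $x_k$ as the comparator and obtain $\ps{\Phi(s_k - x_k), r_k} \geq (\beta - \norme{x^*}_1)\norme{\Phi^t r_k}_\infty$, so the hypothesis on $\beta$ enters only through the computation of Remark~\ref{rk:hyp} (to certify $\norme{x^*}_1 < \beta/2$). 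You likewise replace Lemma~\ref{lemma: expression of gamma} --- whose proof excludes clipping of the line search at $\gamma_k = 1$ by a contradiction traced back to iteration $0$ --- with the direct identity $\norme{\Phi(s_k - x_k)}_2^2 - \ps{\Phi(s_k - x_k), r_k} = \norme{\Phi s_k - \signal}_2^2 + \beta\norme{\Phi^t r_k}_\infty - \ps{\Phi^t r_k, x^*} \geq 0$, and you re-derive rather than cite Lemma 2 of~\cite{GV06}. What your route buys: since Remark~\ref{rk:hyp} gives $\norme{x^*}_1/\beta \leq \te/2$ with $\te = \tfrac{2\norme{\signal}_2}{\beta}\sqrt{m/(1-\mu_1(m-1))}$, your contraction factor $\tfrac{1-\mu_1(m-1)}{4m}\left(1-\tfrac{\norme{x^*}_1}{\beta}\right)^2$ dominates the paper's $\tfrac{1-\mu_1(m-1)}{4m}(1-\te)^2$ and does not degenerate as $\beta$ approaches the threshold. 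Better still, your chain does not actually need the factor $2$ anywhere: replacing your triangle-inequality step by the paper's unconditional bound $\norme{\Phi(s_k - x_k)}_2 \leq \norme{s_k - x_k}_1 \leq 2\beta$ (valid simply because $s_k, x_k \in \Boule_1(\beta)$), the whole argument runs under the weaker assumption $\beta > \norme{x^*}_1$, i.e.\ you have in effect proved that one may take $K = 0$ in Theorem~\ref{theorem: exponential convergence FW}. This also corrects your closing diagnosis: the factor $2$ is not what controls $\norme{\Phi(s_k - x_k)}_2$ (that control is free); its role --- in the paper's proof, and the only place it survives in yours --- is to dominate the iterate bound (paper) and to make the hypothesis checkable without knowing $x^*$ while still certifying $\beta > \norme{x^*}_1$ (both proofs). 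Two loose ends, both harmless: the exact-decrease formula presumes $\Phi(s_k - x_k) \neq 0$, but your gap bound shows this can fail only when $r_k = 0$, where the claim is trivial; and the inequality $\norme{r_k}_2 \leq \norme{\signal}_2$ used in your norm bound should be justified by monotonicity of $f(x_k)$, which holds because $\gamma = 0$ is admissible in the line search.
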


The proof of Theorem~\ref{theorem: minimal value for beta} is also  available in Appendix~\ref{sec:proofTh3}.

In the proof of Theorem~\ref{theorem: exponential convergence FW} (see  Appendix~\ref{sec:proofTh2}), we show that when the iterates $x_k$ stay close enough to $x^*$ ($\norme{x_k-x^*}_1\leq \epsilon = \frac{\beta-\norme{x^*}_1}{2}$), Algorithm~\ref{algorithm: Frank-Wolfe for ExactSparse} converges exponentially. The intuition of Theorem~\ref{theorem: minimal value for beta} is to choose $\beta$ large enough so that a similar bound is guaranteed from the first iteration.

\begin{remark}
\label{rk:hyp}
Let us remark that the assumption $\beta > 2\norme{\signal}_2\sqrt{\tfrac{m}{1 - \mu_1(m-1)}} $ is stronger than that of Theorem~\ref{theorem: exponential convergence FW} ($\beta > \norme{x^*}_1$). Indeed, we have on the one hand:
$  \norme{x^*}_1 \leq\sqrt{m}\norme{x^*}_2$  because $x^*$ has non-zero coefficients only in $\Lambda_{opt}$. On the other hand:
\begin{equation*}
\norme{\signal}_2 = \norme{\Phi x^*}_2 \geq \lambda^*_{min}\norme{x^*}_2 \geq \sqrt{1 - \mu_1(m-1)}\norme{x^*}_2.
\end{equation*}
We conclude
\begin{equation}
\norme{x^*}_1 \leq\norme{\signal}_2\sqrt{\tfrac{m}{1 - \mu_1(m-1)}}.
\end{equation}
So that $\beta > 2\norme{\signal}_2\sqrt{\tfrac{m}{1 - \mu_1(m-1)}}$ implies $\beta >\norme{x^*}_1$.

Besides, while the assumption of Theorem~\ref{theorem: exponential convergence FW} ($\beta > \norme{x^*}_1$) can not be verified beforehand since it depends on the unknown $x^*$, the assumption of Theorem~\ref{theorem: minimal value for beta} can be checked since it depends on the dictionary and $\signal$.
\end{remark}


\section{Numerical Simulations}
\label{sec:expe}

Theorem~\ref{theorem: exponential convergence FW} shows that there is exponential convergence when the sparsity $m$ is small enough ($m < \tfrac{1}{2}(\mu^{-1} + 1)$) and $\beta$ is larger than $\norme{x^*}_1$. The goal of this section is to investigate whether these conditions are tight by performing three numerical experiments on synthetic data.

We simulate  in Python signals of size $d=10000$ that are sparse on a dictionary of $n=20000$ atoms. The considered dictionaries are random, with coefficients following a standard normal distribution.
The mean coherence of such dictionaries is around $\mu = 5.8 \times 10^{-2}$, and $m^* = \lceil \tfrac{1}{2}(\mu^{-1} - 1)\rceil = 8$ is the largest integer such that the condition of Theorem~\ref{theorem: exponential convergence FW} holds. The signals are also random. The supports of size $m$ are drawn uniformly at random while the corresponding coefficients are chosen using a standard normal distribution. For each experiment, Algorithm~\ref{algorithm: Frank-Wolfe for ExactSparse} is run for $2000$ simulated signals and dictionaries.

The exponential convergence in Theorem~\ref{theorem: exponential convergence FW}, is quantified by
$$ \norme{ r_{k} }_2^2 \leq \norme{ r_{k-1} }_2^2 (1 - \theta). $$
which is equivalent to
\begin{equation*}\label{eqt: log residu}
  \log \norme{ r_{k} }_2 \leq \log\norme{ \signal } + \tfrac{1}{2} k\log (1 - \theta).
\end{equation*} 
In the first two experiments, we visualize the convergence rate by displaying the quantity $\log \norme{ r_{k} }_2$, the convergence being exponential when this is upper-bounded by a line with negative slope (the steepest the slope, the fastest the convergence).

In the first experiment, the values of $m$ and $\beta$ comply with the conditions of Theorem~\ref{theorem: exponential convergence FW}. We fix $\beta = 8 \norme{x^*}_1$, and $m = m^*$. We draw in Figure~\ref{figure: exp1 convergence expo} the mean and the maximum over the $2000$ simulated signals and dictionaries of the function $\log \norme{r_{k}}_2$ for each iteration $k$, and compare it to the theoretical bound in Theorem~\ref{theorem: exponential convergence FW}. As expected, the maximum and the mean of the function $\log \norme{r_{k}}_2$ can be bounded above by a line with negative slope, and thus converge exponentially. We also notice that in practice, the maximum and the mean are much lower than the theoretical prediction. This suggests that the theoretical bound might be improved in this case.
\begin{figure}[h]
\includegraphics[scale = 0.45]{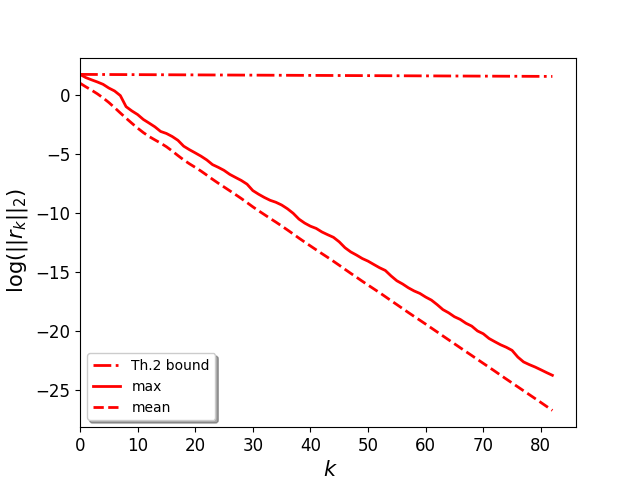}
\caption{Comparison of $\log\norme{r_k}_2$ with the theoretical bound on $2000$ simulations for $\beta = 8 \norme{x^*}_1$ and $m =m^*$.}
\label{figure: exp1 convergence expo}
\end{figure}

In the second experiment, we investigate if the exponential convergence is still possible when the sparsity is larger than $m^* =\lceil \tfrac{1}{2}(\mu^{-1} + 1)\rceil - 1$, i.e., when the condition of Theorem~\ref{theorem: exponential convergence FW} is not satisfied. 
We fix here $\beta = 8\norme{x^*}_1$ and show in Figure~\ref{figure: exp2 different value of m} the maximal value of $\log \norme{r_{k}}_2$ for $m=m^*$, $2m^*$, $5m^*$ and $20m^*$ on $2000$ signals and dictionaries. We observe that exponential convergence still arises at least up to $m=5m^*$ but probably not for $m = 20m^*$, suggesting that in practice one may reconstruct very fast a larger set of signals than only those being $m^*$-sparse, and that there might be room for a little improvement in the assumption $m \leq m^*$ in Theorem~\ref{theorem: exponential convergence FW}.
\begin{figure}[h]
\includegraphics[scale = 0.45]{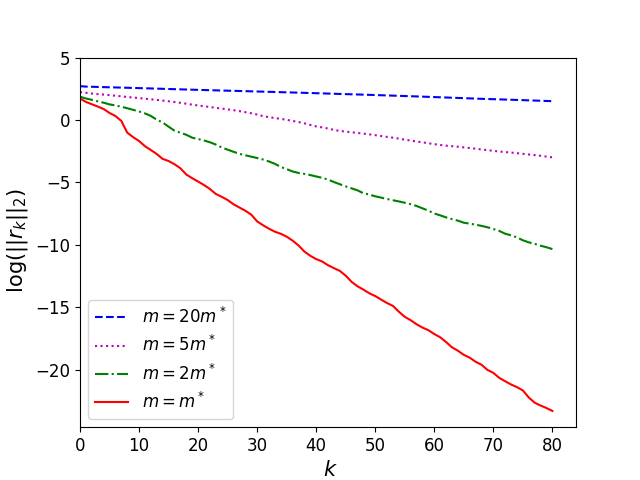}
\caption{Influence of $m$ on the maximum value of $\log\norme{r_k}_2$ on $2000$ simulations for $\beta = 8\norme{x^*}_1$.}
\label{figure: exp2 different value of m}
\end{figure}

In the last experiment, we study the influence of the distance from $x^*$ to $\Boule_1(\beta)$ on the convergence rate. Indeed Theorem~\ref{theorem: exponential convergence FW} predicts that the convergence slows down when $\norme{x^*}_1$ approaches $\beta$ and does not predict exponential convergence if $\norme{x^*}_1=\beta$.
In this experiment, the sparsity $m$ is fixed to $m =m^*$. 
We show in Figure~\ref{figure: exp3 different value of beta log} the mean and theoretical values of $\log\norme{r_k}_2$
on $2000$ signals and dictionaries
 in two cases: either $\beta = \beta_1 = 1.1\norme{x^*}_1$ or $\beta = \beta_2 = 8\norme{x^*}_1$. 
 As expected, the negative slope is steeper when $\beta$ is larger. For large $\beta$ ($\beta_2$), the slope stays well below the slope predicted by the theoretical bound. This is not the case anymore for $\beta$ close to $\norme{x^*}_1$  ($\beta_1$), where the curve becomes horizontal, suggesting that the theoretical bound may be reached and that the assumption $\beta>\norme{x^*}_1$ might be necessary.


%
\begin{figure}[h]
\includegraphics[scale = 0.45]{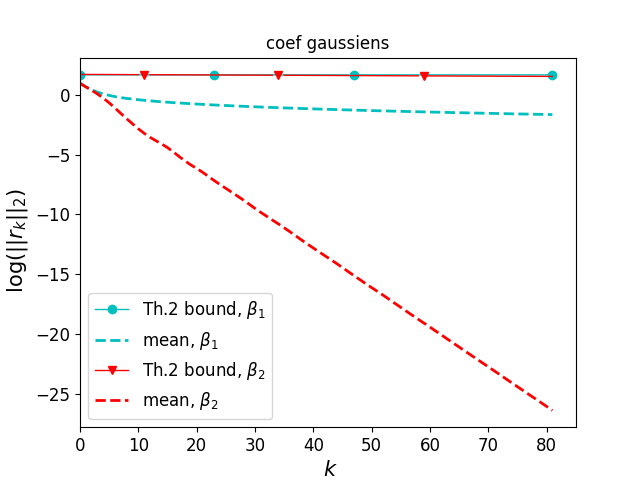}
\caption{Mean of $\log\norme{r_k}_2$
on $2000$ simulations
 for $m = m^*$  and $\beta_1 = 1.1\norme{x^*}_1$ 
 and $\beta_2 = 8\norme{x^*}_1$. 
 }
\label{figure: exp3 different value of beta log}
\end{figure}


\section{Conclusion}
We study the properties of the Frank-Wolfe algorithm  when solving the \ExactSparse problem
and we prove that, like with MP and OMP, when the signal is sparse enough with respect to the coherence of the dictionary, the Frank-Wolfe algorithm picks up only atoms of the support. We also prove that under this same condition, the Frank-Wolfe algorithm converges exponentially. 
In the experimental part, we have observed the optimality of the obtained bound in terms of the size of the $\ell_1$-ball constraining the search space, gaining some insights on the sparsity bound, that would suggest studying its tightness in future work. Extending these results to the case of non-exact-sparse but only compressible signals is also a  natural next step.

\appendices

\section{Proof of Theorem \ref{theorem: exponential convergence FW}}
\label{sec:proofTh2}
To prove Theorem \ref{theorem: exponential convergence FW}, we need to introduce the following lemma:
\begin{Lemma} 
\label{lemma: expression of gamma}
For any iteration $k$ of Algorithm~\ref{algorithm: Frank-Wolfe for ExactSparse}, if $\gamma_k \neq 0$, and if $\norme{ y }_2 < \beta$ then:
\begin{equation*}
  \gamma_k = \frac{\ps{ \Phi(s_k - x_k) , r_k }}{\norme{\Phi(s_k - x_k) }^2_2}. \nonumber 
\end{equation*}
\end{Lemma}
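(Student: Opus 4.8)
The plan is to solve in closed form the one–dimensional line–search problem that defines $\gamma_k$, and then to verify that the optimal step is clipped neither from below nor from above, so that it coincides with the unconstrained minimizer of the quadratic — which is exactly the announced expression. The hypothesis $\gamma_k\neq0$ will rule out the lower clipping, and the hypothesis $\norme{\signal}_2<\beta$ will rule out the upper clipping.

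Concretely, I would first set $g(\gamma)=\tfrac12\norme{\signal-\Phi(x_k+\gamma(s_k-x_k))}_2^2=\tfrac12\norme{r_k-\gamma\,\Phi(s_k-x_k)}_2^2$, so that $\gamma_k=\argmin_{\gamma\in[0,1]}g(\gamma)$. If $\Phi(s_k-x_k)=0$ then $g$ is constant and the algorithm returns $\gamma_k=0$, contradicting the hypothesis; hence $\Phi(s_k-x_k)\neq0$ and $g$ is a strictly convex parabola. Its unique unconstrained minimizer is found from $g'(\gamma)=-\ps{\Phi(s_k-x_k),r_k}+\gamma\norme{\Phi(s_k-x_k)}_2^2=0$, giving $\gamma^\star=\ps{\Phi(s_k-x_k),r_k}/\norme{\Phi(s_k-x_k)}_2^2$, and the constrained minimizer is the projection $\gamma_k=\min\{1,\max\{0,\gamma^\star\}\}$. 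Since $\gamma_k\neq0$, the lower clipping is inactive and $\gamma^\star>0$; it only remains to prove $\gamma^\star\leq1$.

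To establish $\gamma^\star\leq1$ I would prove the equivalent inequality $g'(1)\geq0$, i.e. $\ps{\signal-\Phi s_k,\ \Phi(s_k-x_k)}\leq0$. The geometric idea is that the extreme point $\Phi s_k=\pm\beta\varphi_{i_k}$ has norm exactly $\beta$, whereas the target obeys $\norme{\signal}_2<\beta$; moreover the residual norm is nonincreasing, since $\gamma=0$ is always feasible, so $\norme{r_{k+1}}_2\leq\norme{r_k}_2\leq\norme{r_0}_2=\norme{\signal}_2<\beta$. Thus a full step to the vertex $\Phi s_k$ overshoots $\signal$, and the line–search optimum is attained strictly before $\gamma=1$. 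The ingredients I would combine are $\norme{\Phi s_k}_2=\beta$, $\norme{r_k}_2\leq\norme{\signal}_2<\beta$, the bound $\norme{\Phi x_k}_2\leq\norme{x_k}_1\leq\beta$ (the atoms are unit–norm), and the optimality of the selection step $\ps{\Phi s_k,r_k}=\beta\norme{\Phi^t r_k}_\infty\geq\ps{\Phi x_k,r_k}$.

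The main obstacle is precisely this last inequality. Writing $g'(1)=\norme{\signal-\Phi s_k}_2^2-\ps{\signal-\Phi s_k,\ \signal}+\ps{\signal-\Phi s_k,\ \Phi x_k}$, the delicate quantity is the cross term $\ps{\signal-\Phi s_k,\ \Phi x_k}$, which could in principle be negative. Crude Cauchy--Schwarz or triangle–inequality bounds on the norms only deliver $\gamma^\star\leq1$ in the easy regime $\norme{\signal}_2\leq\beta/2$; to cover the whole range $\norme{\signal}_2<\beta$ I expect to need the fact that $s_k$ maximizes $\ps{\Phi s,r_k}$ over the $\ell_1$ ball (so that $\signal-\Phi s_k$ cannot be too strongly anticorrelated with $\Phi x_k$), used together with $\norme{x_k}_1\leq\beta$, rather than norm bounds alone. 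Once $\gamma^\star\leq1$ is secured, the projection collapses to $\gamma_k=\gamma^\star$, which is the claimed formula.
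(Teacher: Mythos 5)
Your setup is the same as the paper's: write the line search as a strictly convex quadratic in $\gamma$, identify the unconstrained minimizer $\gamma^\star=\ps{\Phi(s_k-x_k),r_k}/\norme{\Phi(s_k-x_k)}_2^2$, observe that $\gamma_k$ is the clipping of $\gamma^\star$ to $[0,1]$, and use $\gamma_k\neq0$ to kill the lower clip. But the heart of the lemma is the upper clip, i.e.\ proving $\gamma^\star\leq 1$, and there your proposal stops at a plan (``I expect to need\dots''): no proof of $g'(1)\geq0$ is given. Worse, the plan cannot be completed, because every ingredient you list is a pointwise property of iteration $k$ --- $\norme{\Phi s_k}_2=\beta$, $\norme{x_k}_1\leq\beta$, $\norme{r_k}_2\leq\norme{y}_2<\beta$, and the selection optimality $\ps{\Phi s_k,r_k}=\beta\norme{\Phi^t r_k}_\infty\geq\ps{\Phi x_k,r_k}$ --- and these are jointly consistent with $\gamma^\star>1$. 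Concretely, take $\Phi=I_2$ (two orthonormal atoms), $\beta=1$, $y=(0.958,\,0.25)$, so $\norme{y}_2\approx0.990<\beta$, and $x_k=(0.268,\,-0.439)$, so $\norme{x_k}_1=0.707\leq\beta$. Then $r_k=(0.690,\,0.689)$ satisfies $\norme{r_k}_2\approx0.975\leq\norme{y}_2$, the selection picks $s_k=e_1$ with the correct sign, $\ps{\Phi s_k,r_k}=0.690\geq\ps{\Phi x_k,r_k}\approx-0.118$, and yet
\begin{equation*}
\gamma^\star=\frac{\ps{\Phi(s_k-x_k),r_k}}{\norme{\Phi(s_k-x_k)}_2^2}=\frac{0.8076}{0.7285}\approx1.108>1 .
\end{equation*}
So the obstruction is not just the $\norme{y}_2\leq\beta/2$ regime you flag; no argument using only those facts can close the gap. (The pointwise route does work under the \emph{stronger} hypothesis $y=\Phi z$ with $\norme{z}_1\leq\beta$, since then $\ps{y,r_k}\leq\beta\norme{\Phi^t r_k}_\infty=\ps{\Phi s_k,r_k}$ gives $g'(1)\geq\norme{\Phi s_k-y}_2^2\geq0$; but $\norme{y}_2<\beta$ does not imply this, and the lemma is stated for all $y$ with $\norme{y}_2<\beta$.)

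What saves the lemma --- and what the paper actually uses --- is global information about the trajectory, not the state at iteration $k$ alone. The paper first handles $k=0$: by Cauchy--Schwarz, $\gamma_0^\star=\ps{\Phi s_0,y}/\beta^2\leq\norme{y}_2/\beta<1$, which is where the hypothesis $\norme{y}_2<\beta$ enters, so $\gamma_0=\gamma_0^\star<1$. Then, for $k\geq1$, it argues by contradiction: if $\gamma_k=1$, then $x_{k+1}=s_k$, and monotonicity of the objective along the algorithm gives $f(s_k)=f(x_{k+1})\leq f(x_1)=f(\gamma_0 s_0)<f(s_0)$, the last inequality being strict precisely because $\gamma_0\neq1$. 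Expanding $f(s_k)<f(s_0)$ and cancelling $\norme{\Phi s_k}_2=\norme{\Phi s_0}_2=\beta$ yields $\ps{\Phi s_k,y}>\ps{\Phi s_0,y}$, contradicting the optimality of the \emph{iteration-$0$} selection $s_0=\argmax_{s\in\Boule_1(\beta)}\ps{\Phi s,y}$. This comparison of iteration $k$ against the first selection, made with respect to $y$ itself, is exactly what your list of ingredients is missing; in the counterexample above it is violated ($f(x_k)\approx0.475$ while $f(s_0)\approx0.032$), which is why that configuration can never be reached by the algorithm.
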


\begin{proof}
Recall that 
$$\gamma_k = \argmin_{\gamma \in [0, 1]} \norme{\signal - \Phi(x_k + \gamma(s_k - x_k) )}^2_2 $$
and define
$$\gamma^*_k = \argmin_{\gamma \in \mathbb{R}} \norme{ \signal - \Phi(x_k + \gamma(s_k - x_k) }^2_2.$$
Note that 
$$\gamma^*_k=\frac{\ps{ \Phi(s_k - x_k) , r_k }}{\norme{\Phi(s_k - x_k) }^2_2},$$
so we wish to prove that $\gamma_k=\gamma^*_k$.

Because $ \gamma_k$ is the solution of the same minimization problem as $\gamma^*_k$ but restricted on the interval $[0, 1]$, we have only three possibilities: (i) $\gamma^*_k\geq 1$ and $\gamma_k= 1$, (ii)
$0<\gamma_k=\gamma^*_k<1$, (iii) $\gamma^*_k\leq 0$ and $\gamma_k= 0$.
Here we assume that $\gamma_k\not= 0$ so the last possibility (iii) is ruled out.
What is left to do to finish the proof is to rule out the first possibility: (i) $\gamma^*_k\geq 1$ and $\gamma_k= 1$.

To do so, consider these two different cases:
\begin{itemize}
  \item $k = 0$: since $x_0=0$ and $r_0=\signal$,
  $$\gamma^*_0 =
\frac{\ps{  \Phi s_0, y }}{ \| \Phi s_0 \|_2^2 }  \leq  \frac{\| y \|_2}{\beta}. $$
Since $\norme{ y }_2 < \beta$, we have $\gamma_0^* < 1$. Moreover, by construction of $s_0$, $\ps{  \Phi s_0, y }>0$  so $0<\gamma_0^* < 1$. We conclude we are in case (ii) and $\gamma_0=\gamma_0^*$.
  \item $k \neq 0$: assume that $\gamma_k = 1$. We then have $x_{k+1} = s_k$.
By construction of the Frank-Wolfe algorithm we have:
$$ f(x_{k+1}) = f(s_k) \leq f(x_k) \leq \dots \leq  f(x_1) = f(\gamma_0 s_0).$$
Since we proved that $\gamma_0 \neq 1$ , we have:
$$ f(\gamma_0 s_0) < f(s_0). $$
This implies $f(s_k) < f(s_0)$, that is:
\begin{align*}
   \norme{ \signal - \Phi s_k }^2_2 &< \norme{ \signal - \Phi s_0 }^2_2\\
   \ps{  \Phi s_0, \signal}  &< \ps{ \Phi s_k, \signal}.
\end{align*}
Since $s_0 = \sign(\ps{  \varphi_{i_0}, \signal })\beta e_{i_0}$, both sides of the previous equation are positive:
$$ 0< \ps{ \Phi s_0, \signal}  < \ps{ \Phi s_k, \signal}  $$
This is clearly a contradiction because $s_0 = \argmax_{s \in \Boule_1(\beta)} \ps{  \Phi s, \signal}$. We conclude that $0<\gamma_k < 1$ so that we are again in case (ii) where $\gamma_k=\gamma_k^*$.
\end{itemize}
We conclude that if $\gamma_k>0$ and $\norme{ y }_2 < \beta$ then $\gamma_k = \frac{\ps{ \Phi(s_k - x_k) , r_k }}{\norme{\Phi(s_k - x_k) }^2_2}$.
\end{proof}
%

\begin{proof}[Proof of Theorem~\ref{theorem: exponential convergence FW}]
Note that we are in the case where both Theorem~\ref{theorem: recovery for FW} and Corollary~\ref{cor:convergence} hold.
Let $k$ be an iteration of Algorithm~\ref{algorithm: Frank-Wolfe for ExactSparse}. There are two possibles values for $\gamma_k$:

\paragraph{$\gamma_k = 0$} then $x_k = x_{k+1}$ and subsequently for all $l \geq k$, $x_{k} = x_l$ and $f(x_{k}) = f(x_l)$. 
The convergence of the objective values yields: $f(x_{l}) = f(x^*)=0$ for $l \geq k$
and in particular $\norme{ r_{k+1} }^2_2 = \norme{ r_{k} }^2_2 = 0$. Thus Theorem~\ref{theorem: exponential convergence FW} holds.

\paragraph{ $0<\gamma_k \leq 1$} by definition of the residual, we have:
\begin{align*} 
     \norme{ r_{k+1} }^2_2 & =  \norme{ \signal - \Phi x_{k+1} }^2_2 \\ 
     & =   \norme{ r_k - \Phi\gamma_k(s_k - x_k) }_2^2  \label{rk+1} \\
     & =   \norme{ r_k}^2_2 -2 \gamma_k \ps{ \Phi(s_k - x_k)), r_k } + \gamma_k^2 \norme{\Phi(s_k - x_k)) }_2^2. 
\end{align*}
$\gamma_k$ is the solution of the following optimization problem:
$$ \gamma_k = \argmin_{\gamma \in [0,1]} \norme{ \signal - \Phi (x_k + \gamma(s_k - x_k)) }_2^2. $$
Notice that $\norme{\Phi v}_2\leq \norme{v}_1$ for all vectors $v$ in $\mathbb{R}^n$ since the $\varphi_i$ are of unit norm 
($\norme{\Phi v}_2^2 = \sum_{i,j=1}^n v[i]v[j]\ps{ \varphi_i,\varphi_j}
\leq \sum_{i,j=1}^n |v[i]| |v[j]| =\norme{v}_1^2
$).
Hence $$\norme{y}_2 = \norme{\Phi x^*}_2\leq\norme{x^*}_1 <\beta.$$ 
As showed in Lemma~\ref{lemma: expression of gamma}, the solution of the previous problem is then:
\begin{align*}
  \gamma_k  = & \frac{\ps{ \Phi(s_k - x_k), r_k }}{\norme{\Phi(s_k - x_k) }^2_2}. 
\end{align*}
Replacing the value of $\gamma_k$ in the previous equation, we obtain:
\begin{eqnarray}
\norme{ r_{k+1} }^2_2 = \norme{ r_{k} }^2_2 - \frac{ \ps{  \Phi(s_k - x_k),r_k }^2}{\norme{\Phi(s_k - x_k) }^2_2}. \label{rk+1 apres avoir remplacer gama}
\end{eqnarray}
We shall now lower-bound $\ps{  \Phi(s_k - x_k),r_k }^2$ and upper-bound
 $\norme{\Phi(s_k - x_k) }^2_2$.

To bound $\norme{\Phi (s_k - x_k)}_2^2$, we use $\norme{\Phi v}_2\leq \norme{v}_1$ and $s_k$ and $x_k$ are in $\Boule_1(\beta)$:
\begin{equation}
  \label{borne norme}
  \norme{\Phi (s_k - x_k)}_2^2 \leq\norme{s_k - x_k}_1^2 \leq   4 \beta^2.
\end{equation} 
To bound $\ps{  \Phi(s_k - x_k),r_k }^2$, fix $\epsilon = \tfrac{\beta - \norme{x^*}_1}{2} >0$. 
As noted in Corollary~\ref{cor:convergence}, the iterates $x_k$ converge to $x^*$, there exists an iteration $K$ such that for every $k \geq K$: $ \norme{x_k - x^*}_1 \leq \epsilon $. Fix $k\geq K$.
Let us define $u \in \mathbb{R}^n$, such that:
$$
u[i] = \left\{
    \begin{array}{ll}
        \frac{\epsilon}{\sqrt{m}\norme{\Phi_{\Lambda_{opt}}^tr_k}_2}(\Phi^t r_k)[i] & \mbox{if } i \in \Lambda_{opt} \\
        0 & \mbox{otherwise.}
    \end{array}
\right.
$$
One can show that $x_k + u$ is in $\Boule_1(\beta)$, indeed
\begin{eqnarray*}
 \norme{x_k + u}_1 &\leq& \norme{x_k-x^*}_1 + \norme{x^*}_1 + \norme{u}_1 \\                   
&\leq& \epsilon + \norme{x^*}_1 + \frac{\epsilon}{\sqrt{m}\norme{\Phi_{\Lambda_{opt}}^tr_k}_2}\norme{\Phi_{\Lambda_{opt}}^tr_k}_1 .\\
\end{eqnarray*}
Noting that  $\norme{\Phi_{\Lambda_{opt}}^t r_k}_1 \leq \sqrt{m} \norme{\Phi_{\Lambda_{opt}}^t r_k}_2$ (because $\Phi_{\Lambda_{opt}}^t r_k\in\mathbb{R}^{|\Lambda_{opt}|}$ and $|\Lambda_{opt}|\leq m$) leads to:
$$ \norme{x_k + u}_1\leq 2\epsilon + \norme{x^*}_1=\beta.$$
We conclude that $x_k + u$ is in $\Boule_1(\beta)$. 
Since $s_k = \argmin_{s \in \Boule_1(\beta)}\ps{  s , \nabla f(x_k)} $ then:
\begin{eqnarray*}
 \ps{  s_k , \nabla f(x_k)} &\leq& \ps{   x_k + u , \nabla f(x_k)} \\
\ps{  s_k -x_k , \nabla f(x_k)} &\leq& \ps{   u , \nabla f(x_k)} .
\end{eqnarray*}
By definition of $f$: $\nabla f(x_k) = -\Phi^t r_k $, thus:
\begin{eqnarray*}
\ps{  s_k -x_k , \Phi^t r_k} &\geq& \ps{   u , \Phi^t r_k} \\
\ps{  \Phi (s_k -x_k) ,  r_k} &\geq& \ps{   u , \Phi^t r_k} .
\end{eqnarray*}
Noting that 
$$ \ps{   u , \Phi^t r_k } = \sum_{i\in\Lambda_{opt}}\frac{\epsilon}{\sqrt{m}\norme{\Phi_{\Lambda_{opt}}^tr_k}_2}(\Phi^tr_k)^2[i] = \frac{\epsilon\norme{\Phi_{\Lambda_{opt}}^tr_k}_2}{\sqrt{m}},$$
we conclude:
$$\ps{  \Phi(s_k - x_k), r_k } \geq \frac{\epsilon}{\sqrt{m}} \norme{\Phi_{\Lambda_{opt}}^tr_k}_2.$$
By Theorem~\ref{theorem: recovery for FW}, $r_k$ is in $\Span(\Phi_{\Lambda_{opt}})$ and since the atoms indexed by $\Lambda_{opt}$ are linearly independent (thus $\lambda_{min}^{*} > 0$), we obtain:
$$ \norme{\Phi_{\Lambda_{opt}}^t r_k}_2 \geq \lambda_{min}^{*} \norme{r_k}_2  $$
and
$$ \ps{  \Phi(s_k - x_k), r_k } \geq \frac{\epsilon}{\sqrt{m}} \lambda_{min}^{*} \norme{r_k}_2. $$
By Lemma $2.3$ of~\cite{T04}, $(\lambda_{min}^{*})^2 \geq 1 - \mu_1(m - 1)$.
Since $ m < \tfrac{1}{2}(\mu^{-1} + 1) $ implies $\mu_1(m) + \mu_1(m-1) < 1$~\cite{T04}, we have $0<1 - \mu_1(m-1)<1$ and deduce that:
\begin{eqnarray}
  \ps{ \Phi(s_k - x_k), r_k } \geq \epsilon \sqrt{\frac{1 - \mu_1(m - 1)} {m} } \norme{r_k}_2.  
\label{borne produit scalaire}
\end{eqnarray} 
Plugging in the bounds of Eq.~\eqref{borne norme} and~\eqref{borne produit scalaire} in Eq.~\eqref{rk+1 apres avoir remplacer gama}, we obtain:
\begin{align*}
\norme{ r_{k+1} }^2  = & \norme{ r_{k} }^2 - \frac{\ps{ \Phi(s_k - x_k) , r_k }^2}{\norme{\Phi(s_k - x_k) }^2}  \\
\norme{ r_{k+1} }^2  \leq & \norme{ r_{k} }^2 \left( 1 - \frac{\epsilon^2(1 - \mu_1(m-1))}{4\beta^2m} \right) \\
\norme{ r_{k+1} }^2  \leq & \norme{ r_{k} }^2 \left( 1 - \tfrac{1}{16}\left(\tfrac{1 - \mu_1(m-1)}{m}\right)\left(1-\tfrac{\norme{x^*}_1}{\beta}\right)^2 \right). 
\end{align*}
which finishes the proof.
\end{proof}

\section{Proof of Theorem~\ref{theorem: minimal value for beta}}
\label{sec:proofTh3}
To prove Theorem~\ref{theorem: minimal value for beta}, the key is to bound uniformly the $l_1$ norm of the iterates $x_k$. This is the purpose of the following lemma.
\begin{Lemma} \label{lemma: $x_k$ bounded by $x^*$}
Let $\Phi $ be a dictionary, $\mu$ its coherence, and $\signal = \Phi x^* $ an $m$-sparse signal. If $ m < \tfrac{1}{2}(\mu^{-1} + 1) $, then for each iteration $k$ of Algorithm~\ref{algorithm: Frank-Wolfe for ExactSparse}
$$ \norme{ x_{k} }_1 \leq 2\norme{\signal}_2\sqrt{\tfrac{m}{1 - \mu_1(m-1)}}.$$
\end{Lemma}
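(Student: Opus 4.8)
The plan is to exploit the recovery property of Theorem~\ref{theorem: recovery for FW} to replace the useless feasibility bound $\norme{x_k}_1\leq\beta$ by a bound depending only on $\signal$, $m$ and $\mu_1(m-1)$. The crucial observation is that, since $x_0=0$ and every selected index $i_k$ lies in $\Lambda_{opt}$, an immediate induction on the update $x_{k+1}=(1-\gamma_k)x_k+\gamma_k s_k$ (with $s_k=\pm\beta e_{i_k}$) shows that each iterate $x_k$ is supported on $\Lambda_{opt}$, hence is $m$-sparse. This lets me chain the inequalities $\norme{x_k}_1\leq\sqrt{m}\,\norme{x_k}_2$ and $\norme{\Phi x_k}_2\geq\lambda_{min}^{*}\norme{x_k}_2$, reducing the whole problem to bounding $\norme{\Phi x_k}_2$.

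To control $\norme{\Phi x_k}_2$ I would use that Frank--Wolfe is a descent method: choosing $\gamma=0$ is always admissible in the step-size problem, so $f(x_{k+1})\leq f(x_k)$ and therefore $f(x_k)\leq f(x_0)=\tfrac12\norme{\signal}_2^2$, i.e. $\norme{\signal-\Phi x_k}_2\leq\norme{\signal}_2$. A triangle inequality then gives $\norme{\Phi x_k}_2\leq\norme{\signal}_2+\norme{\signal-\Phi x_k}_2\leq 2\norme{\signal}_2$. Combining this with the singular-value inequality above yields $\norme{x_k}_2\leq 2\norme{\signal}_2/\lambda_{min}^{*}$, and finally, invoking Lemma~2.3 of~\cite{T04} in the form $\lambda_{min}^{*}\geq\sqrt{1-\mu_1(m-1)}$ (which is positive since $m<\tfrac12(\mu^{-1}+1)$ implies $\mu_1(m-1)<1$), I obtain
$$\norme{x_k}_1\leq\sqrt{m}\,\norme{x_k}_2\leq\frac{2\sqrt{m}\,\norme{\signal}_2}{\lambda_{min}^{*}}\leq 2\norme{\signal}_2\sqrt{\tfrac{m}{1-\mu_1(m-1)}},$$
which is exactly the claimed bound.

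The main conceptual obstacle is precisely the first step: one must realise that feasibility ($\norme{x_k}_1\leq\beta$) is the wrong handle, because it makes the bound grow with $\beta$, whereas Theorem~\ref{theorem: minimal value for beta} needs a bound that is uniform in, and later compared against, $\beta$. Once the recovery property is used to confine $x_k$ to the $m$-dimensional subspace spanned by $\Phi_{\Lambda_{opt}}$, every remaining step is a routine application of the descent property, the triangle inequality, and the standard sparse $\ell_1$/$\ell_2$ and singular-value estimates already used elsewhere in the paper.
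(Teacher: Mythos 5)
Your proof is correct and follows essentially the same route as the paper's: confine $x_k$ to the support via Theorem~\ref{theorem: recovery for FW}, chain $\norme{x_k}_1 \leq \sqrt{m}\norme{x_k}_2$ with the singular-value bound $\lambda_{min}^{*} \geq \sqrt{1-\mu_1(m-1)}$, and control $\norme{\Phi x_k}_2 \leq 2\norme{\signal}_2$ by the Frank--Wolfe descent property plus the triangle inequality. The only (harmless) difference is that you spell out the induction showing the iterates stay supported on $\Lambda_{opt}$, which the paper simply attributes to Theorem~\ref{theorem: recovery for FW}.
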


\begin{proof}

Indeed, we have on the one hand:
$  \norme{x_k}_1 \leq\sqrt{m}\norme{x_k}_2$  because $x_k$ has non-zero coefficients only in $\lambda_{opt}$ (proved in Theorem~\ref{theorem: recovery for FW}). On the other hand:
\begin{equation*}
\norme{\Phi x_k}_2 \geq \lambda^*_{min}\norme{x_k}_2 \geq \sqrt{1 - \mu_1(m-1)}\norme{x_k}_2.
\end{equation*}
We conclude
\begin{equation*}
\norme{x_k}_1 \leq\sqrt{\tfrac{m}{1 - \mu_1(m-1)}}\norme{\Phi x_k}_2.
\end{equation*}
Moreover
\begin{align*}
\norme{\Phi x_k}_2 &\leq \norme{\Phi x_k-y}_2 + \norme{y}_2\\
&\leq \sqrt{2f(x_k)} + \norme{y}_2\\
&\leq \sqrt{2f(x_0)} + \norme{y}_2\\
&\leq \norme{y}_2 + \norme{y}_2\\
&\leq 2\norme{y}_2,
\end{align*}
where the third line holds because by construction of the Frank-Wolfe algorithm, the sequence $\{f(x_k)\}_k$ is non increasing.
So we conclude that $\norme{x_k}_1 \leq 2\norme{y}_2\sqrt{\tfrac{m}{1 - \mu_1(m-1)}}$ for all $k$.
\end{proof}

\begin{proof}[Proof of Theorem~\ref{theorem: minimal value for beta}]
Since $\beta > 2\norme{\signal}_2\sqrt{\tfrac{m}{1 - \mu_1(m-1)}} \geq\norme{x^*}_1$ (see Remark~\ref{rk:hyp}), we can re-use the arguments of the proof of Theorem~\ref{theorem: exponential convergence FW}. We will do so up to Eq.~\eqref{borne norme} and only modify the lower bound on
$\ps{ \Phi(s_k - x_k) , r_k }^2 $.

By definition of $s_k$,
$$ \ps{ \Phi s_k , r_k } = \beta\max_{i} \va{\ps{  \varphi_i, r_k }} $$
and
\begin{eqnarray*}
\ps{  \Phi x_k ,r_k} & = & \ps{ x_k,  \Phi^t r_k } = \sum_i(x_k)[i] (\Phi^t r_k)[i] \\  
                              & \leq & \norme{x_k}_1 \max_i \va{(\Phi^t r_k)[i]}\\
                              & = & \norme{x_k}_1 \max_i \va{ \ps{  \varphi_i,r_k } }.
\end{eqnarray*}
We conclude that:
\begin{eqnarray*}
\label{eq norme rk et phi sk et xk}
 \ps{\Phi(s_k - x_k),  r_k} & = & \ps{ \Phi s_k , r_k } - \ps{  \Phi x_k, r_k } \\
 & \geq & \beta \max_i \va{ \ps{  \varphi_i, r_k} } \left( 1 - \frac{\norme{x_k}_1}{\beta} \right).
\end{eqnarray*}
By Lemma 2 of~\cite{GV06}:
$$ \max_i \va{ \ps{ \varphi_i, r_k } } \geq \norme{r_k}_2\sqrt{\tfrac{1 - \mu_1(m-1)}{m}}. $$
Hence:
\begin{equation}
\label{eq norme rk et phi sk et xk 2}
 \ps{ \Phi(s_k - x_k), r_k } \geq \beta \norme{r_k}_2\sqrt{\tfrac{1 - \mu_1(m-1)}{m}} \left( 1 - \frac{\norme{x_k}_1}{\beta} \right).
\end{equation}

What is left to prove is that $1 - \frac{\norme{x_k}_1}{\beta}$ is uniformly bounded away from zero.
Using Lemma~\ref{lemma: $x_k$ bounded by $x^*$} we obtain: 
\begin{equation}
\label{eq borne norme 1 xk}
 1- \frac{\norme{x_k}_1}{\beta}  \geq  1- \frac{ 2\norme{\signal}_2}{\beta}\sqrt{\tfrac{m}{1 - \mu_1(m-1)}}.
\end{equation}
By assumption  $1- \frac{ 2\norme{\signal}_2}{\beta}\sqrt{\tfrac{m}{1 - \mu_1(m-1)}}>0$, we deduce that for all $k$:
\begin{equation}
 \ps{ \Phi(s_k - x_k), r_k } \geq \beta \norme{r_k}_2\sqrt{\tfrac{1 - \mu_1(m-1)}{m}} \left( 1-\tau\right)>0,
\end{equation}
with
\begin{equation}
\tau =  \tfrac{ 2\norme{\signal}_2}{\beta}\sqrt{\tfrac{m}{1 - \mu_1(m-1)}}.
\end{equation}
Using this and Eq.~\eqref{rk+1 apres avoir remplacer gama}, we obtain:
\begin{eqnarray*}
\norme{r_{k+1} }^2_2 & = & \norme{ r_{k} }_2^2 - \frac{ \ps{ \Phi(s_k - x_k), r_k }^2}{\|\Phi(s_k - x_k) \|^2}\\
                   & \leq & \norme{ r_{k} }_2^2 - \norme{r_k}_2^2\frac{1 - \mu_1(m-1)}{4m} \left( 1 - \tau\right)^2.
\end{eqnarray*}
We conclude that for all $k$:
$$\norme{r_{k+1} }^2_2 \leq  \norme{ r_{k} }^2_2 \left( 1 - \frac{(1-\mu_1(m-1))}{4m} \left( 1 - \tau\right)^2 \right),$$
with $0<\left( 1 - \frac{(1-\mu_1(m-1))}{4m}\left( 1 - \tau\right)^2 \right)<1$ which proves the exponential convergence from the first iteration. 
\end{proof}

\section*{Acknowledgment}

The authors would like to thank the Agence Nationale de la Recherche under grant JCJC MAD (ANR-14-CE27-0002) which supported this work.

\ifCLASSOPTIONcaptionsoff
  \newpage
\fi



%

\bibliographystyle{plain}
\bibliography{biblio}

\begin{thebibliography}{10}

\bibitem{BD08}
Thomas Blumensath and Mike~E Davies.
\newblock Gradient pursuits.
\newblock {\em IEEE Transactions on Signal Processing}, 56(6):2370--2382, 2008.

\bibitem{BV04}
Stephen Boyd and Lieven Vandenberghe.
\newblock {\em Convex optimization}.
\newblock Cambridge university press, 2004.

\bibitem{B15}
S{\'e}bastien Bubeck et~al.
\newblock Convex optimization: Algorithms and complexity.
\newblock {\em Foundations and Trends{\textregistered} in Machine Learning},
  8(3-4):231--357, 2015.

\bibitem{CDS01}
Scott~Shaobing Chen, David~L Donoho, and Michael~A Saunders.
\newblock Atomic decomposition by basis pursuit.
\newblock {\em SIAM review}, 43(1):129--159, 2001.

\bibitem{DMA97}
Geoff Davis, Stephane Mallat, and Marco Avellaneda.
\newblock Adaptive greedy approximations.
\newblock {\em Constructive approximation}, 13(1):57--98, 1997.

\bibitem{FW56}
Marguerite Frank and Philip Wolfe.
\newblock An algorithm for quadratic programming.
\newblock {\em Naval Research Logistics (NRL)}, 3(1-2):95--110, 1956.

\bibitem{GV06}
R{\'e}mi Gribonval and Pierre Vandergheynst.
\newblock On the exponential convergence of matching pursuits in
  quasi-incoherent dictionaries.
\newblock {\em IEEE Transactions on Information Theory}, 52(1):255--261, 2006.

\bibitem{GM86}
Jacques Gu{\'e}lat and Patrice Marcotte.
\newblock Some comments on wolfe's ‘away step’.
\newblock {\em Mathematical Programming}, 35(1):110--119, 1986.

\bibitem{LKTJ17}
Francesco Locatello, Rajiv Khanna, Michael Tschannen, and Martin Jaggi.
\newblock A unified optimization view on generalized matching pursuit and
  frank-wolfe.
\newblock In {\em Artificial Intelligence and Statistics}, pages 860--868,
  2017.

\bibitem{MZ92}
St{\'e}phane~G Mallat and Zhifeng Zhang.
\newblock Matching pursuits with time-frequency dictionaries.
\newblock {\em IEEE Transactions on signal processing}, 41(12):3397--3415,
  1993.

\bibitem{M02}
Alan Miller.
\newblock {\em Subset selection in regression}.
\newblock Chapman and Hall/CRC, 2002.

\bibitem{PRK93}
Yagyensh~Chandra Pati, Ramin Rezaiifar, and Perinkulam~Sambamurthy
  Krishnaprasad.
\newblock Orthogonal matching pursuit: Recursive function approximation with
  applications to wavelet decomposition.
\newblock {\em Signals, Systems and Computers, 1993. 1993 Conference Record of
  The Twenty-Seventh Asilomar Conference on}, pages 40--44, 1993.

\bibitem{RK97}
Bhaskar~D Rao and Kenneth Kreutz-Delgado.
\newblock An affine scaling methodology for best basis selection.
\newblock {\em IEEE Transactions on signal processing}, 47(1):187--200, 1999.

\bibitem{T03}
Vladimir~N. Temlyakov.
\newblock Nonlinear methods of approximation.
\newblock {\em Foundations of Computational Mathematics}, 3(1), 2003.

\bibitem{T11}
Vladimir~N. Temlyakov.
\newblock {\em Greedy approximation}, volume~20.
\newblock Cambridge University Press, 2011.

\bibitem{T96}
Robert Tibshirani.
\newblock Regression shrinkage and selection via the lasso.
\newblock {\em Journal of the Royal Statistical Society. Series B
  (Methodological)}, pages 267--288, 1996.

\bibitem{T04}
Joel~A. Tropp.
\newblock Greed is good: Algorithmic results for sparse approximation.
\newblock {\em IEEE Transactions on Information theory}, 50(10):2231--2242,
  2004.

\bibitem{T06}
Joel~A. Tropp.
\newblock Just relax: Convex programming methods for identifying sparse signals
  in noise.
\newblock {\em IEEE transactions on information theory}, 52(3):1030--1051,
  2006.

\end{thebibliography}




%




\begin{IEEEbiographynophoto}{Farah Cherfaoui}
received a B.Sc. degree in computer science in 2016 and a M.Sc. degree in computer science in 2018, both from Aix-Marseille university, Marseille, France. She is now a Ph.D. student at the same university. Her research focuses on active learning and she is also interested in some signal processing problems.
\end{IEEEbiographynophoto}
\begin{IEEEbiographynophoto}{Valentin Emiya} 
graduated from Telecom Bretagne, Brest, France, in 2003 and received the M.Sc. degree in Acoustics, Signal Processing and Computer Science Applied to Music (ATIAM) at Ircam, France, in 2004. He received his Ph.D. degree in Signal Processing in 2008 at Telecom ParisTech, Paris, France. From 2008 to 2011, he was a post-doctoral researcher with the METISS group at INRIA, Centre Inria Rennes - Bretagne Atlantique, Rennes, France. He is now assistant professor (Ma\^itre de Conf\'erences) in computer science at Aix-Marseille University, Marseille, France, within the QARMA team at Laboratoire d'Informatique et Syst\`emes.
His research interests focus on machine learning and audio signal processing, including sparse representations, sound modeling, inpainting, source separation, quality assessment and applications to music and speech.
\end{IEEEbiographynophoto}
\begin{IEEEbiographynophoto}{Liva Ralaivola}
has been Full Professor in Computer Science at Aix-Marseille University (AMU) since 2011 — LIS Research Unit —, member of Institut Universitaire de France since 2016 and part-time researcher at Criteo since 2018. He received the Ph.D. degree in Computer Science from Université Paris 6 in 2003, and his Habilitation à Diriger des Recherches in Computer Science from AMU in 2010. His research focuses on statistical and algorithmic aspects of machine learning with a focus on theoretical issues (risk of predictors, concentration inequalities in non-IID settings, bandit problems, greedy methods). 
\end{IEEEbiographynophoto}
\begin{IEEEbiographynophoto}{Sandrine Anthoine}
received a B.Sc. and M.Sc. in mathematics from ENS Cachan, France, in 1999 and 2001 respectively. She obtained a Ph.D in applied mathematics at Princeton University in 2005. She then worked as a postdoctoral researcher at Aix-Marseille University for a year. Since 2006, she is working as a research fellow for the \emph{Centre National de Recherche Scientifique (CNRS)}. She started at the computer science and signal processing lab. (I3S) of Universit\'e de Nice C\^ote d'Azur and is with the \emph{Institut de Math\'ematiques de Marseille} since 2010.
\end{IEEEbiographynophoto}


\end{document}